\newtheorem{thm}{Theorem}[section]
\newtheorem{cor}[thm]{Corollary}
\newtheorem{prop}[thm]{Proposition}
\newtheorem{lem}[thm]{Lemma}
\newtheorem{quest}[thm]{Question}
\newtheorem{conj}[thm]{Conjecture}
\newtheorem{obs}[thm]{Observation}
\theoremstyle{definition}
\newtheorem{defn}{Definition}
\setlist[enumerate]{itemsep=2ex, topsep=2ex} 
\setlist[itemize]{itemsep=2ex, topsep=2ex}
\newcommand{\al}{\alpha}
\newcommand{\sig}{\sigma}
\newcommand{\Om}{\Omega}
\newcommand{\half}{\frac{1}{2}}
\newcommand{\sm}{\setminus}
\newcommand{\sub}{\subseteq}
\renewcommand{\c}[1]{\mathcal{#1}}
\newcommand{\sgn}{\sig}
\title{The Wiener Index of Signed Graphs}
\author{Sam Spiro\footnote{Dept.\ of Mathematics, UCSD {\tt sspiro@ucsd.edu}. This material is based upon work supported by the National Science Foundation Graduate Research Fellowship under Grant No. DGE-1650112.}}
\date{\today}
\begin{document}
	\maketitle
	
\begin{abstract}
	The Wiener index of a graph $W(G)$ is a well studied topological index for graphs.  An outstanding problem of  \v{S}olt{\'e}s is to find graphs $G$ such that $W(G)=W(G-v)$ for all vertices $v\in V(G)$, with the only known example being $G=C_{11}$.  We relax this problem by defining a notion of Wiener indices for signed graphs, which we denote by $W_\sigma(G)$, and under this relaxation we construct many signed graphs such that $W_\sigma(G)=W_\sigma(G-v)$ for all $v\in V(G)$.   This ends up being related to a problem of independent interest, which asks when it is possible to $2$-color the edges of a graph $G$ such that there is a path between any two vertices of $G$ which uses each color the same number of times.


	
	
	%
\end{abstract}

\section{Introduction}
The Wiener index of a graph $G$ is defined to be $W(G)=\half \sum_{u,v\in V(G)} d_G(u,v)$, and throughout we adopt the convention that $d_G(u,v)=\infty$ if $u,v$ are in different components of $G$.  This definition was introduced by Wiener~\cite{W} in order to solve a problem in chemistry, and since then numerous results involving the Wiener index have been published, see for example \cite{DEG, KST,T,xu2014survey} and the numerous references therein.  Of particular interest to us is a problem of   \v{S}olt{\'e}s~\cite{S} which asks to find graphs $G$ which satisfy $W(G)=W(G-v)$ for all vertices $v\in V(G)$.  The only known example is $G=C_{11}$, and we refer the reader to papers of Knor, Majstrovi\'c, and \v{S}krevoski~\cite{KMS,KMS3,KMS2} for a thorough treatment of this problem.

Our main goal of this paper is to establish a notion of Wiener indices for signed graphs.  A \textit{signed graph} is a pair $(G,\sig)$ where $G$ is a graph and $\sig$ is a function from $E(G)$ to $\{\pm 1\}$ which is called a \textit{signing}.  Given a signed graph $(G,\sig)$ and a subgraph $G'\sub G$, we abuse notation slightly by writing $(G',\sig)$ to be the signed graph where $\sig$ is the restriction of the map $\sig:E(G)\to \{\pm 1\}$ to $E(G')$.   In our drawings of signed graphs we represent negative edges with dashed lines and positive edges with solid lines, see for example Figure~\ref{fig:P1}.  We say that a path $P$ is a \textit{$uv$-path} if its endpoints are $u$ and $v$.

\begin{defn}
	If $P$ is a path in $G$ and $\sig$ is a signing of $G$, we define $\sig(P):=\sum_{e\in P} \sig(e)$.  For $u,v\in V(G)$ we define the \textit{signed distance} $d_{G,\sig}(u,v)=\min_P |\sig(P)|$ where the minimum ranges over all $uv$-paths $P$, and when $G$ is understood we will simply denote this by $d_\sig(u,v)$.  We define the Wiener index $W_\sig(G)$ of the signed graph $(G,\sig)$ by $W_\sig(G)=\half \sum_{u,v\in V(G)} d_\sig(u,v)$.  
\end{defn}
Observe that if $\sig$ is a constant function, then $d_\sig(u,v)=d(u,v)$, and hence $W_\sig(G)=W(G)$.  In particular, if $W(G)=W(G-v)$ for all $v\in V(G)$, then there exists a (constant) signing $\sig$ of $G$ such that $W_\sig(G)=W_\sig(G-v)$.  Thus the problem of finding signed graphs $(G,\sig)$ with $W_\sig(G)=W_\sig(G-v)$ can be viewed as a relaxation of \v{S}olt{\'e}s' problem.  

In this paper we give many examples of signed graphs satisfying $W_\sig(G)=W_\sig(G-v)$ for all $v\in V(G)$, and even with $W_\sig(G)=W_\sig(G-S)$ for any set $S$ of size less than some value $k$.  All of these examples rely heavily on the fact that, in the signed setting, it is possible to have $W_\sig(G)=0$.  More generally, we use the following definition.

\begin{defn}
	We say that a signing $\sig$ of a graph $G$ is \textit{$k$-canceling} if for any set $S\sub V(G)$ of size less than $k$, we have $W_\sig(G-S)=0$.  We say that a graph $G$ is \textit{$k$-canceling} if there exists a $k$-canceling signing $\sig$ of $G$.  We will refer to 1-canceling graphs (i.e. those with $W_\sig(G)=0$ for some $\sig$) simply as \textit{canceling} graphs.
\end{defn}    

In Section~\ref{sec:k} we construct several families of graphs which are $k$-canceling for any given value of $k$.  We postpone describing these results, and instead focus on the nicest cases of $k=1,2$.  To state our main result, we recall that the square of a graph $G$, denoted $G^2$, is the graph which has $V(G^2)=V(G)$ and $uv\in G^2$  if and only if $d_G(u,v)\le 2$.  The following result implies, in particular, that squares of connected graphs on at least 5 vertices are canceling.
\begin{thm}\label{thm:square}
	Let $G$ be an $n$-vertex graph with $n\ge 5$.  If $G$ contains the square of an $n$-vertex tree, then $G$ is canceling.  If $G$ contains the square of a Hamiltonian cycle, then $G$ is 2-canceling.
\end{thm}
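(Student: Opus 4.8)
The plan rests on two reductions followed by one explicit signing. Throughout I use the \emph{standard signing} $\sig$ that assigns $+1$ to every edge of the underlying tree (respectively cycle) and $-1$ to every ``distance-$2$'' edge of its square. First I record a monotonicity principle: since $d_{G,\sig}(u,v)=\min_P|\sig(P)|\ge 0$ and enlarging the edge set of $G$ only enlarges the family of candidate $uv$-paths, adding edges can never increase a signed distance. Consequently, if $H$ is a spanning subgraph of $G$ and $\sig$ is a signing of $H$ with $W_\sig(H)=0$, then extending $\sig$ arbitrarily (say by $+1$) to the remaining edges of $G$ gives $W_\sig(G)=0$. Thus for the first statement it suffices to show that the square of an $n$-vertex tree ($n\ge 5$) is canceling under the standard signing, and for the second that $C_n^2$ (the square of the Hamiltonian cycle) is $2$-canceling under the standard signing; the general $G$ then inherits the property because $G-S\supseteq C_n^2-S$ for every $S$ with $|S|<2$.

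The heart is the tree statement. Note that $W_\sig(G)=0$ is equivalent to: every pair $u,v$ is joined by a \emph{balanced} path, i.e.\ a path using equally many $+1$- and $-1$-edges (in particular of even length). I would build such paths from three moves along the unique tree-path $u=x_0,x_1,\dots,x_D=v$, where $D=d_T(u,v)$: the bare tree-path contributes sign $+D$; replacing a subpath $x_{i-1}x_ix_{i+1}$ by the single distance-$2$ edge $x_{i-1}x_{i+1}$ (a \emph{shortcut}) changes the sign by $-3$ while staying on the same vertices minus $x_i$; and, when $x_i$ has a neighbor $y$ off the tree-path, rerouting the edge $x_{i-1}x_i$ through $y$ as $x_{i-1},y,x_i$ (a \emph{detour}, using the distance-$2$ edge $x_{i-1}y$ and the tree edge $yx_i$) changes the sign by $-1$ using a fresh vertex. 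Using $s$ shortcuts and $t\in\{0,1,2\}$ detours yields sign $D-3s-t$, so choosing $t\equiv D\pmod 3$ and $s=(D-t)/3$ produces a balanced path, provided enough shortcuts and off-path vertices are available. The hypothesis $n\ge 5$ is exactly what guarantees the room needed for these length-$4$ configurations; indeed the square of the $4$-vertex star and $P_4^2$ are \emph{not} canceling, so some lower bound on $n$ is unavoidable.

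A cleaner route to the tree statement is induction via leaf deletion: if $\ell$ is a leaf of $T$ then, since a leaf lies on no shortest path between other vertices, $T^2-\ell$ is exactly $T'^2$ for $T'=T-\ell$, carrying the same standard signing. By induction every pair in $V(T')$ already has a balanced path, so the inductive step only needs balanced paths from the new leaf $\ell$ to every other vertex. Writing such a path as $\ell,p,\dots$ or $\ell,q,\dots$ (where $p$ is the neighbor of $\ell$ and $q$ a distance-$2$ neighbor) shows that what one really needs is a path from $p$ (or $q$) to the target of sign $\mp1$; hence I would strengthen the inductive hypothesis to guarantee not only balanced ($0$) paths but also paths of sign $\pm1$ between suitable pairs, and carry this extra bookkeeping through the induction. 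For the cycle, the same signing makes $C_n^2$ canceling by a direct construction (vertex-transitivity reduces it to joining $0$ to each $d\le n/2$, and two-directional wraparound gives ample room); and $C_n^2-v$ contains the spanning subgraph $P_{n-1}^2$ whose induced signing is again the standard one, so it is canceling by the tree result once $n-1\ge5$, with $n=5$ (where $C_5^2-v$ is a signed $K_4$) checked by hand. Together these give that $C_n^2$ is $2$-canceling.

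The main obstacle is the explicit path construction underlying both parts: organizing the case analysis on $D\bmod 3$, certifying that enough shortcut/detour gadgets exist (which fails along ``bare'' path-segments and near leaves, exactly the delicate spots already visible in $P_n^2$), and --- most importantly --- verifying that the assembled paths are simple. This is also what forces the strengthened inductive statement controlling sign sums $0,\pm1$ rather than just $0$, and what makes the small base cases (and the threshold $n\ge5$) genuinely necessary rather than cosmetic.
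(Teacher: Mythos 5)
Your setup matches the paper's: the spanning-subgraph monotonicity you state is Observation~\ref{obs:span} there, and your ``standard signing'' ($+1$ on tree/cycle edges, $-1$ on distance-$2$ edges) is exactly the signing the paper uses. But there is a genuine gap, and it is not merely the bookkeeping you concede at the end: your central reduction --- ``it suffices to show that the square of an $n$-vertex tree ($n\ge 5$) is canceling \emph{under the standard signing}'' --- is false. For $T=P_6$ the standard signing of $P_6^2$ has $d_\sig(v_1,v_6)=2>0$ (one can check that every $v_1v_6$-path of even length has sign $\pm 2$ or $\pm 4$); this is precisely the exceptional case carved out in the paper's Lemma~\ref{lem:squarePath}. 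So $P_6^2$ is canceling only via a \emph{different} signing, which the paper exhibits and verifies by hand (Figure~\ref{fig:squareSign}). The same oversight breaks your cycle argument at $n=7$: you claim $C_n^2-v$ is handled by the tree result because it contains standard-signed $P_{n-1}^2$ as a spanning subgraph, but for $n=7$ that subgraph is standard-signed $P_6^2$, which is not canceling, and indeed standard-signed $C_7^2$ is not $2$-canceling; the paper needs a second hand-crafted signing for $C_7^2$. Nothing in your proposal detects or repairs these exceptions.

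Moreover, the machinery you propose cannot establish even the non-exceptional cases as stated. Your two moves give sign $D-3s-t$, but a detour requires an off-path neighbor, and along a ``bare'' stretch --- most importantly when $u,v$ are the endpoints of $P_n$ itself, so the tree path exhausts all vertices --- no detours exist, and shortcuts alone only reach $0$ when $D\equiv 0\pmod 3$. The paper's Lemma~\ref{lem:squarePath} gets around this with a different gadget: zigzags such as $v_1v_3v_2v_4$ (sign $-1$ across three units of progress, using no outside vertices), and it is exactly because $P_6$ is too short to fit the required combination of such moves that the $\{1,6\}$ exception arises. You acknowledge this weak spot but offer only the leaf-deletion induction as a fix, and that induction is a plan rather than a proof: the strengthened hypothesis (paths of sign $0,\pm1$ between ``suitable pairs'') is never formulated, and any correct formulation must itself carry exceptions, since the unstrengthened statement already fails for $P_6$. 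By contrast, the paper avoids induction entirely: it proves the path lemma by explicit construction (with the $n=6$ exception), handles stars via $K_n$ (Lemma~\ref{lem:Kn}), and for a general tree $T\ne P_6,S_n$ places any two vertices on a maximal path $Q$ of $T$, applies the path lemma to $Q^2$, and uses vertices off $Q$ --- which exist precisely because $T\ne P_6,S_n$ --- to patch the short and exceptional cases. Your proposal would need all three of these ingredients supplied before it could be called a proof.
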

It was proven by Fan and Kierstead~\cite{FK} that any $n$-vertex graph $G$ with minimum degree at least $2(n-1)/3$ contains the square of a Hamiltonian path, and hence this is a sufficient condition for $G$ to be canceling.  It has also been proven by Koml\'os, S\'ark\"ozy, and Szemer\'edi~\cite{KSS} that if $n$ is sufficiently large and $G$ has minimum degree at least $2n/3$, then $G$ contains the square of a Hamiltonian cycle.  This result, together with Theorem~\ref{thm:square} and the definition of being 2-canceling, gives the following.

\begin{cor}
	If $n$ is sufficiently large and $G$ is an $n$-vertex graph with minimum degree at least $2n/3$, then there exists a signing $\sig$ of $G$ such that $W_\sig(G)=W_\sig(G-v)=0$ for all $v\in V(G)$.
\end{cor}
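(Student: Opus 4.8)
The plan is to combine the two cited results with the definition of being 2-canceling, so that the argument is essentially a chaining of implications. First I would invoke the theorem of Koml\'os, S\'ark\"ozy, and Szemer\'edi~\cite{KSS}: since $n$ is sufficiently large and $G$ has minimum degree at least $2n/3$, the graph $G$ contains the square of a Hamiltonian cycle on its $n$ vertices. I would then note that for $n$ sufficiently large we certainly have $n\ge 5$, so the hypotheses of Theorem~\ref{thm:square} are met. Applying the second half of that theorem to our $G$---which contains the square of a Hamiltonian cycle---yields that $G$ is 2-canceling; that is, there exists a signing $\sig$ of $G$ which is 2-canceling.

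Finally, I would unwind the definition of a 2-canceling signing. By definition, $W_\sig(G-S)=0$ for every $S\sub V(G)$ with $|S|<2$, i.e.\ for every $S$ of size $0$ or $1$. Taking $S=\emptyset$ gives $W_\sig(G)=0$, and taking $S=\{v\}$ for each $v\in V(G)$ gives $W_\sig(G-v)=0$. This is exactly the desired conclusion, with the single signing $\sig$ working simultaneously for $G$ and for all the vertex-deleted subgraphs.

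Since both of the heavy results (the KSS embedding theorem and Theorem~\ref{thm:square}) are already available to us, there is no genuine obstacle in this deduction: the only points requiring any care are verifying that ``$n$ sufficiently large'' subsumes the bound $n\ge 5$ needed for Theorem~\ref{thm:square}, and matching the degree threshold $2n/3$ appearing in the KSS theorem to the hypothesis of the corollary. All of the real content is packaged inside Theorem~\ref{thm:square} and the cited square-of-a-Hamiltonian-cycle embedding result, so the corollary itself is a short formal consequence.
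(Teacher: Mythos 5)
Your proposal is correct and follows exactly the route the paper intends: the Koml\'os--S\'ark\"ozy--Szemer\'edi theorem supplies the square of a Hamiltonian cycle, the second part of Theorem~\ref{thm:square} then gives a 2-canceling signing, and unwinding the definition of 2-canceling with $S=\emptyset$ and $S=\{v\}$ yields the stated conclusion. Nothing is missing; the only hypotheses to check (namely $n\ge 5$ and the degree threshold) are handled just as the paper does.
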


Our second main result involves necessary conditions for a graph to be canceling.
\begin{thm}\label{thm:necessary}
	If $G$ is a canceling graph on $n\ge 2$ vertices, then the following conditions hold:
	\begin{enumerate}
		\item[(a)] $G$ is connected,
		\item[(b)] $G$ contains an odd cycle,
		\item[(c)] $G$ has minimum degree at least 2, and
		\item[(d)] $G$ has at least $n+2$ edges.
	\end{enumerate}
	Moreover, the conditions of (c) and (d) are best possible: for all $n\ge 5$, there exists an $n$-vertex graph $G$ with minimum degree 2 and $n+2$ edges which is canceling.
\end{thm}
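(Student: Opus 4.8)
I would prove the four necessary conditions by exhibiting, whenever a condition fails, a pair $u,v$ with $d_\sig(u,v)>0$; since signed distances are nonnegative this forces $W_\sig(G)>0$ for \emph{every} signing. For (a), if $G$ is disconnected then $u,v$ in different components satisfy $d_\sig(u,v)=\infty$. For (b), if $G$ is bipartite with parts $A,B$ then every $uv$-path with $u\in A,\ v\in B$ has odd length, so $|\sig(P)|$ is odd and hence at least $1$; any edge across the bipartition gives $d_\sig(u,v)\ge 1$. For (c), a degree-$0$ vertex disconnects $G$ (contradicting (a) when $n\ge 2$), while if $u$ has degree $1$ with neighbour $v$ then the only $uv$-path is the single edge, so $d_\sig(u,v)=1$; the same argument shows a canceling graph has no bridge. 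I would then record two lemmas: (i) no cycle $C_m$ is canceling, since for each edge $e$ the only balanced $uv$-path is the complementary arc, forcing $\sig(e)=\sig(C_m)$ for all $e$ and hence the impossible $\sig(C_m)=m\,\sig(C_m)$; and (ii) for every edge $e=uv$ of a canceling graph there is a balanced path avoiding $e$, so $e$ lies on a cycle $C$ with $|\sig(C)|=1$.

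For (d) I would combine (a), (c) and the no-bridge property: a canceling graph is connected, $2$-edge-connected, and has minimum degree $\ge 2$, hence at least $n$ edges. If it had exactly $n$ or $n+1$ edges its cyclomatic number would be $1$ or $2$, and suppressing degree-$2$ vertices shows it is topologically a single cycle, a theta graph, a figure-eight, or a dumbbell. A single cycle is excluded by Lemma (i); a dumbbell has a bridge; and in a figure-eight the cut vertex $w$ forces every pair inside one lobe to be joined by a balanced path within that lobe, i.e.\ each lobe is a canceling cycle, again contradicting Lemma (i). The remaining and main case is the theta graph, with $a,b$ joined by internally disjoint paths $P_1,P_2,P_3$ of signed sums $s_i=\sig(P_i)$. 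Since the only $ab$-paths are the $P_i$, some $s_i=0$, say $s_1=0$. For each edge $e$ of $P_1$ the only balanced detours avoiding $e$ are $C_{12}\sm e$ and $C_{13}\sm e$, forcing $\sig(e)\in\{s_2,s_3\}$; summing over $P_1$ and using $s_1=0$ then forces either $P_1$ to be monochromatic (so $s_1\ne 0$, a contradiction) or $\{s_2,s_3\}=\{+1,-1\}$, in which case the analogous constraints on $P_2,P_3$ collapse both paths to single $ab$-edges, producing a multi-edge and contradicting simplicity.

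For the tightness statement I would exhibit, for every $n\ge 5$, an explicit $n$-vertex graph $G_n$ of minimum degree exactly $2$ with exactly $n+2$ edges, together with a signing $\sig_n$ under which every pair of vertices is joined by a balanced path, so that $W_{\sig_n}(G_n)=0$. The case $n=5$ is handled by $P_5^2$, which has $7=n+2$ edges, minimum degree $2$, and is canceling by Theorem~\ref{thm:square} (concretely, signing the path edges $+1$ and the distance-$2$ edges $-1$ works). For general $n$ I would use a family of cyclomatic-number-$3$, minimum-degree-$2$ graphs of this type---for instance suitably signed generalized theta (multi-bridge) graphs, or subdivisions of a fixed small cubic multigraph---chosen so that a suitable member exists for each parity of $n$, and verify the balanced-path condition directly.

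The main obstacle is twofold. In (d) it is the theta case, where one must rule out a balanced path by tracking exactly which detours survive for each interior edge. In the tightness construction it is controlling the extreme sparsity: with only $n+2$ edges almost every vertex has degree $2$, so the balanced detours demanded by Lemma (ii) for the interior edges of long induced paths are scarce, and one must simultaneously ensure non-bipartiteness for both parities of $n$. Confirming that the constructed signings genuinely produce a balanced path between \emph{every} pair---especially for degree-$2$ vertices lying on long paths---is where essentially all of the work concentrates.
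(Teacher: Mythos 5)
Your treatment of the necessary conditions (a)--(d) is correct, and in one respect more careful than the paper's own: the paper proves (b) and (c) with the same bipartite and degree-one computations, but it disposes of the $(n+1)$-edge case by quoting two lemmas (a shared-vertex lemma, and a lemma that a union of at most three internally disjoint $xy$-paths is never canceling), and in doing so its case analysis jumps from ``two vertices of degree $3$'' straight to the theta graph, silently skipping the dumbbell --- a case you handle explicitly via your no-bridge observation. Your theta argument itself is a correct and somewhat more symmetric variant of the paper's: you force $\sig(e)\in\{s_2,s_3\}$ for every edge $e$ of the balanced path $P_1$, conclude $\{s_2,s_3\}=\{+1,-1\}$, and then run the same constraint on $P_2,P_3$ to collapse them to parallel $ab$-edges, contradicting simplicity; the paper instead locates a positively signed edge on one of the paths and shows its two endpoints have positive signed distance. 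Both are sound, and both rest on the same structural fact that in a theta graph every path between two adjacent interior vertices is either the edge itself or a detour around two of the three branches.

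The genuine gap is the ``moreover'' half of the theorem. For $n\ge 6$ you offer only a plan --- ``a family of cyclomatic-number-$3$, minimum-degree-$2$ graphs \ldots chosen so that a suitable member exists for each parity of $n$, and verify the balanced-path condition directly'' --- with no family specified, no signing written down, and no verification carried out; as you yourself note, that verification is where essentially all of the work lies, precisely because a graph with only $n+2$ edges has almost all vertices of degree $2$ and balanced detours are scarce. This is exactly the point where the paper introduces a dedicated tool, Lemma~\ref{lem:subdivision}: if $(G',\sig')$ is canceling and $e$ lies on a cycle $C'$ with $\sum_{e'\in C'}\sig'(e')=-\sig'(e)$, then replacing $e$ by a path with an odd number of edges, signed alternately starting with $\sig'(e)$, yields a canceling graph. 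Since this operation adds equally many vertices and edges and keeps minimum degree $2$, applying it to two small base graphs --- a suitably signed $K_4$ (even $n$) and your $P_5^2$ (odd $n$) --- settles every $n\ge 5$ at once. Without such a subdivision lemma, or an equally explicit family with verified signings, the tightness statement remains unproven in your write-up; everything before it stands.
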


Theorem~\ref{thm:necessary} and a simple induction argument gives the following result, where we recall that a graph $G$ is $k$-connected if $G-S$ is connected for all $S\sub V(G)$ of size less than $k$.

\begin{cor}
	If $G$ is a $k$-canceling graph on $n\ge k+1$ vertices, then the following  conditions must hold: 
	\item[(a)] $G$ is $k$-connected,
	\item[(b)] $G$ contains an odd cycle,
	\item[(c)] $G$ has minimum degree at least $k+1$, and
	\item[(d)] $G$ has at least $n+{k\choose 2}+2k$ edges.
\end{cor}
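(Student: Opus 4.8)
The plan is to induct on $k$, with Theorem~\ref{thm:necessary} serving as the base case $k=1$ (note that $1$-connected means connected, that $\binom{1}{2}+2\cdot 1=2$, and that minimum degree $\ge 2$ is minimum degree $\ge k+1$, so all four conclusions specialize correctly). The engine of the induction is the following reduction: if $\sig$ is a $k$-canceling signing of $G$ and $T\sub V(G)$ has $|T|\le k-1$, then the restriction of $\sig$ is a $(k-|T|)$-canceling signing of $G-T$. Indeed, for any $S'\sub V(G-T)$ with $|S'|<k-|T|$ we have $|S'\cup T|<k$, so $W_\sig\big((G-T)-S'\big)=W_\sig\big(G-(S'\cup T)\big)=0$. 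In particular every $G-v$ is $(k-1)$-canceling, and $G-S$ is canceling whenever $|S|\le k-1$; since $n\ge k+1$, each such $G-S$ has at least two vertices, so Theorem~\ref{thm:necessary} applies to it.

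Parts (a), (b), (c) then follow with essentially no induction. For (a), any $S$ with $|S|<k$ leaves $G-S$ canceling, hence connected by part (a) of Theorem~\ref{thm:necessary}; this is exactly the definition of $k$-connectivity. For (b), $G$ itself is canceling (take $S=\emptyset$), so it contains an odd cycle by part (b). For (c), suppose some $v$ had $\deg_G(v)=d\le k$; deleting $d-1$ of its neighbors (a set of size at most $k-1$) leaves $v$ with a single neighbor in a canceling graph on at least two vertices, contradicting the minimum-degree-$2$ conclusion of part (c) of Theorem~\ref{thm:necessary}. Hence $\deg_G(v)\ge k+1$ for all $v$.

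For (d) I would use the induction genuinely. Deleting any vertex $v$ gives a $(k-1)$-canceling graph on $n-1\ge k$ vertices, so by the inductive hypothesis $|E(G-v)|\ge (n-1)+\binom{k-1}{2}+2(k-1)$, and $|E(G)|=|E(G-v)|+\deg_G(v)$. A short computation shows the target $n+\binom{k}{2}+2k$ exceeds the inductive bound by exactly $k+2$, so the induction closes the moment we can choose $v$ with $\deg_G(v)\ge k+2$. By part (c) such a vertex exists \emph{unless} $G$ is $(k+1)$-regular, which is the single case the naive induction misses.

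The main obstacle is precisely this $(k+1)$-regular case, where every vertex attains the minimum degree and removing one vertex falls short by exactly one edge. Here $|E(G)|=n(k+1)/2$, and the desired inequality reduces to $(n-k)(k-1)\ge 4k$, i.e.\ a lower bound $n\ge k+4+\tfrac{4}{k-1}$ on the number of vertices. The plan is to supply this by ruling out small regular graphs directly: for a $(k+1)$-regular $k$-canceling $G$, deleting any $k-1$ vertices leaves a canceling graph on $n-k+1$ vertices, which both forces $n$ to be moderately large (applying part (d) to the residual graph already yields $n\ge k+4+\tfrac{2}{k-1}$) and, in the genuinely small cases, leaves a near-complete graph such as $K_3$ or $K_4$ that can be checked directly not to be canceling. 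A parity remark (the degree sum $n(k+1)$ must be even) disposes of several borderline values, but I expect the careful verification of the remaining small $(k+1)$-regular instances to be the delicate part of the argument.
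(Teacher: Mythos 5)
Your reduction (the restriction of a $k$-canceling signing to $G-T$ is a $(k-|T|)$-canceling signing of $G-T$) is exactly the ``simple induction argument'' the paper has in mind, and your proofs of (a), (b), (c) are complete and correct: each is a direct application of Theorem~\ref{thm:necessary} to a suitable $G-S$ with $|S|\le k-1$, which has at least two vertices because $n\ge k+1$.

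The genuine gap is in (d), and you have located it precisely but not closed it. Your computations are right: the induction needs a vertex of degree $k+2$, so it fails only for $(k+1)$-regular $k$-canceling graphs, and combining the target inequality $n\ge k+4+\tfrac{4}{k-1}$ with the inductive bound applied to $G-v$ (which forces $n\ge k+4+\tfrac{2}{k-1}$) and the parity of $n(k+1)$ leaves exactly two unresolved cases: $(k,n)=(2,8)$ (a cubic graph with $12$ edges against a target of $13$) and $(k,n)=(3,8)$ (a $4$-regular graph with $16$ edges against a target of $17$). Your proposed finish does not apply to these cases: deleting $k-1$ vertices leaves graphs on $7$, respectively $6$, vertices, never a near-complete graph like $K_3$ or $K_4$. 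Worse, edge counting cannot dispose of them: in the $(2,8)$ case the residual graph has $9=7+2$ edges, meeting the bound of Theorem~\ref{thm:necessary}(d) with equality, so one must genuinely show that no cubic graph on $8$ vertices admits a $2$-canceling signing, and that no $4$-regular graph on $8$ vertices admits a $3$-canceling signing. The latter is a live question: $C_8^2$ is such a graph, and while its natural signing is not $3$-canceling (deleting two consecutive cycle vertices leaves $P_6^2$ with exactly the exceptional signing of Lemma~\ref{lem:squarePath}), ruling out every signing is a finite but nontrivial verification that your write-up does not supply.

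To be fair in the comparison: the paper offers no proof beyond the phrase ``Theorem~\ref{thm:necessary} and a simple induction argument,'' and your analysis shows that the honestly executed simple induction (using only minimum degree $k+1$) yields $n+\binom{k}{2}+k+1$ edges, which is $k-1$ short of the stated $n+\binom{k}{2}+2k$. So the regular case you stumbled on is not an artifact of your approach; it is the obstruction hidden in the paper's one-line justification, and settling the two $8$-vertex cases above is exactly what is needed either to confirm the stated constant or to correct it.
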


The rest of the paper is organized as follows.  Theorem~\ref{thm:square} is proven in Section~\ref{sec:square} and Theorem~\ref{thm:necessary} is proven in Section~\ref{sec:necessary}.  In Section~\ref{sec:k} we construct $k$-canceling graphs for all $k$, and in Section~\ref{sec:r} we briefly investigate an $r$-colored variant of $k$-canceling signed graphs.  We close with some concluding remarks in Section~\ref{sec:concluding}.

Before moving onto the main text, we record two basic observations that will be useful to us throughout the paper.
\begin{obs}\label{obs:span}
	If $G'$ is a spanning subgraph of $G$ and $G'$ is $k$-canceling, then $G$ is $k$-canceling.
\end{obs}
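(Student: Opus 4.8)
The plan is to take a $k$-canceling signing of the spanning subgraph $G'$ and extend it to all of $G$ by assigning arbitrary signs to the edges in $E(G)\setminus E(G')$, and then to argue that this extension is itself $k$-canceling. The crucial point is a monotonicity principle: since $d_\sig(u,v)$ is defined as a minimum over $uv$-paths and each term $|\sig(P)|$ is nonnegative, enlarging the set of available paths can only decrease a signed distance, and a signed distance that already equals $0$ cannot decrease any further.

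First I would fix a $k$-canceling signing $\sig'$ of $G'$, which exists by hypothesis, and define a signing $\sig$ of $G$ by $\sig(e)=\sig'(e)$ for $e\in E(G')$ and (say) $\sig(e)=+1$ for $e\in E(G)\setminus E(G')$. Next I would record the elementary fact that, for any signed graph $(H,\sig)$, one has $W_\sig(H)=0$ if and only if $H$ is connected and every pair $u,v\in V(H)$ is joined by a path $P$ with $\sig(P)=0$. This holds because $W_\sig(H)$ is a sum of the nonnegative quantities $d_\sig(u,v)$, each of which equals $\infty$ precisely when $u,v$ lie in different components, so the whole sum vanishes exactly when each summand does.

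The main step is to verify $W_\sig(G-S)=0$ for every $S\subseteq V(G)$ with $|S|<k$. Since $G'$ is spanning we have $V(G')=V(G)$, so $S\subseteq V(G')$ as well and $G'-S$ is a spanning subgraph of $G-S$. Because $\sig'$ is $k$-canceling we have $W_{\sig'}(G'-S)=0$, so for each pair $u,v\in V(G)\setminus S$ there is a $uv$-path $P$ in $G'-S$ with $\sig'(P)=0$. This same path $P$ survives in $G-S$, and since all of its edges belong to $E(G')$ we have $\sig(P)=\sig'(P)=0$. Hence $d_{G-S,\sig}(u,v)\le |\sig(P)|=0$, forcing $d_{G-S,\sig}(u,v)=0$ for every pair, and therefore $W_\sig(G-S)=0$, as desired.

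There is no serious obstacle here: the only subtlety is the worry that extending the signing might somehow spoil the canceling property, but this cannot happen precisely because each relevant signed distance is already pinned at its minimum possible value of $0$. The argument in fact goes through verbatim for any choice of signs on the new edges, which is exactly why the extension is harmless.
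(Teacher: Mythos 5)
Your proof is correct and follows essentially the same approach as the paper, which simply takes any signing of $G$ that restricts to a $k$-canceling signing of $G'$ and observes that zero-sign paths in $G'-S$ persist in $G-S$. Your write-up just fills in the routine details (monotonicity of the signed distance under edge addition and the fact that $W_\sig$ vanishes iff every signed distance does) that the paper leaves implicit.
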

This result follows by taking any signing $\sig$ of $G$ that restricts to a $k$-canceling signing of $G'$.  A slightly less obvious observation is the following.
\begin{obs}\label{obs:k}
	If $G$ has at least $k+1$ vertices, then a signing $\sig$ is $k$-canceling if and only if $W_\sig(G-S)=0$ for all sets $S\sub V(G)$ of size exactly $k-1$.
\end{obs}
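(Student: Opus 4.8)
The forward direction is immediate from the definition: a $k$-canceling signing satisfies $W_\sig(G-S)=0$ for every $S$ of size less than $k$, and in particular for every $S$ of size exactly $k-1$. So the content lies entirely in the reverse direction, and the plan is to show that the single condition ``$W_\sig(G-S)=0$ for all $|S|=k-1$'' forces $W_\sig(G-S')=0$ for every strictly smaller set $S'$.

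The first step is to reformulate the vanishing of the Wiener index pairwise. Since each signed distance $d_\sig(u,v)$ is a nonnegative quantity (a minimum of the absolute values $|\sig(P)|$ over $uv$-paths $P$, with the convention that it is $+\infty$ when no such path exists), the sum $W_\sig(H)=\half\sum_{u,v}d_\sig(u,v)$ vanishes if and only if $d_{H,\sig}(u,v)=0$ for every pair $u,v\in V(H)$. Equivalently, every pair of vertices of $H$ must be joined by a $uv$-path $P$ with $\sig(P)=0$. The key monotonicity point is that such a witnessing path, once it exists in a smaller induced subgraph obtained by deleting \emph{more} vertices, is automatically present in the larger one.

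Concretely, I would fix a set $S'$ with $|S'|<k-1$ and an arbitrary pair $u,v\in V(G-S')$ with $u\ne v$, and produce a $uv$-path $P$ with $\sig(P)=0$ lying in $G-S'$. Here I would enlarge $S'$ to a set $S\supseteq S'$ of size exactly $k-1$ that still avoids both $u$ and $v$. This is the only step requiring care, and it is where the hypothesis $n\ge k+1$ enters: the vertices available to adjoin lie in $V(G)\sm(S'\cup\{u,v\})$, a set of size $n-|S'|-2\ge (k+1)-|S'|-2=k-1-|S'|$, which is exactly the number of extra vertices needed. By hypothesis $W_\sig(G-S)=0$, so $d_{G-S,\sig}(u,v)=0$ and there is a $uv$-path $P$ in $G-S$ with $\sig(P)=0$. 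Since $S'\sub S$ we have $G-S\sub G-S'$, so $P$ is also such a path in $G-S'$, giving $d_{G-S',\sig}(u,v)=0$. As $u,v$ were arbitrary, $W_\sig(G-S')=0$.

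The main (and only) obstacle is thus the counting estimate guaranteeing enough spare vertices remain after reserving $u$ and $v$, which is precisely the role of the bound $n\ge k+1$; everything else is bookkeeping. One should also record the degenerate situation in which $G-S'$ has at most one vertex, but since $n-|S'|\ge n-(k-1)\ge 2$ the graph $G-S'$ always has at least two vertices, and in any case $W_\sig$ of a graph on at most one vertex is an empty sum equal to $0$, so no pair needs to be treated.
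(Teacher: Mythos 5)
Your proof is correct and takes essentially the same approach as the paper: extend the small set $S'$ to a set $S$ of size exactly $k-1$ avoiding $u,v$ (possible precisely because $n\ge k+1$), then pull back the zero-sign $uv$-path from $G-S$ to $G-S'$. Your explicit counting of spare vertices just makes precise what the paper asserts in passing.
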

\begin{proof}
	The forward direction is clear, so assume $W_\sig(G-S)=0$ for all sets $S\sub V(G)$ of size $k-1$, and let $T\sub V(G)$ be a set of size $\ell<k$.  We claim that $d_{G-T,\sig}(u,v)=0$ for any $u,v\in V(G-T)$.  Indeed, take any $S\supset T$ of size $k-1$ which does not contain $u,v$ (which exists since $G$ has at least $k+1$ vertices).  Then by assumption of $W_\sig(G-S)=0,$ there exists a $uv$-path $P$ in $G-S$ from $u$ to $v$ with $\sig(P)=0$.  This path is also in $G-T$, which shows that $d_{G-T,\sig}(u,v)=0$.  As $T,u,v$ were arbitrary, we conclude that $\sig$ is $k$-canceling.
\end{proof}

\section{Proof of Theorem~\ref{thm:square}}\label{sec:square}
In order to prove Theorem~\ref{thm:square}, we first show that a particular signing for squares of path graphs gives $W_\sig (P_n^2)=0$ for $n\ge 5$ and $n\ne 6$.
\begin{lem}\label{lem:squarePath}
	Let $P_n$ be the $n$-vertex path on $v_1,\ldots,v_n$ and let $\sig$ be the signing of $P_n^2$ such that $\sig(e)=+1$ if and only if $e\in P_n$.   If $n\ge 5$, then we have $d_\sig(v_i,v_j)=0$ for all $i,j$ unless $n=6$ and $\{i,j\}=\{1,6\}$
\end{lem}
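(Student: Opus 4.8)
The plan is to explicitly construct, for each pair $(i,j)$, a $v_iv_j$-path $P$ in $P_n^2$ whose edges carry signs summing to zero, so that $d_\sig(v_i,v_j)=0$. Recall the signing: edges of the original path $P_n$ (i.e.\ pairs $v_kv_{k+1}$) are positive, while the ``skip'' edges $v_kv_{k+2}$ are negative. Thus a single step $v_k\to v_{k+1}$ contributes $+1$, a step $v_k\to v_{k+2}$ contributes $-1$, and the same with signs reversed when moving in the decreasing direction; the key arithmetic fact is that two $+1$ steps ($v_k\to v_{k+1}\to v_{k+2}$) can be exactly canceled by going back with one $-1$ step, or more usefully that a $+1$ forward step and a $-1$ forward step together advance two indices while contributing $0$ to the sign.

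First I would reduce to constructing, for each $i<j$, a path whose total displacement is $j-i$ but whose signed length is $0$. The cleanest building block is the pair of consecutive moves $v_k\to v_{k+1}\to v_{k+3}$, which advances the index by $3$ using one $+1$ and one $-1$ edge for a net sign of $0$; alternatively $v_k\to v_{k+2}\to v_{k+3}$ does the same. Another block, $v_k\to v_{k+1}\to v_{k+2}\to v_{k+4}$, advances by $4$ with net sign $+1+1-1=+1$, so I would instead balance $+1$ and $-1$ contributions directly. The natural strategy is to write the displacement $j-i$ as a nonnegative combination that can be realized by forward $+1$ and forward $-1$ moves in equal number (giving sign $0$ and even displacement) together with an occasional short detour to fix parity. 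Concretely, to traverse from $v_i$ to $v_j$ with zero signed length I would alternate $+1$ and $-1$ edges, since $a$ positive and $a$ negative steps advance the index by exactly $a\cdot 1 + a\cdot 2 = 3a$ while contributing sign $0$; handling displacements not divisible by $3$ requires inserting a small fixed gadget (for instance a back-and-forth $v_k\to v_{k+2}\to v_{k+1}$ contributing $-1+1=0$ while netting $+1$ displacement, taking care to use distinct vertices to keep $P$ a path).

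I would then organize the proof by the residue of $j-i$ modulo a small number and verify each residue class admits such a zero-sign path, being careful that the vertices used stay within $\{1,\dots,n\}$ and are distinct. The bound $n\ge 5$ enters because the gadgets need a few vertices of room, and the single exceptional pair $\{1,6\}$ in the $n=6$ case arises because there the endpoints are the two extreme vertices and there is no interior slack to absorb the necessary detour—any $v_1v_6$-path in $P_6^2$ is forced onto a short list of candidates none of which sums to zero, which I would confirm by a finite case check.

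The hard part will be the bookkeeping that guarantees the constructed walk is genuinely a \emph{path} (no repeated vertices) rather than merely a closed-sign walk, especially near the endpoints $v_1,v_n$ where fewer skip-edges are available and where the parity-fixing gadget must be placed without running off the ends of the path. I expect the cleanest writeup to give one or two explicit path templates parameterized by $i,j$ and $n\bmod 3$ (or by $(j-i)\bmod 3$), then dispatch the small remaining cases—including the genuine exception $\{1,6\}$ when $n=6$—by direct inspection.
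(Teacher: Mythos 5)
Your plan is essentially the paper's own proof: the paper likewise classifies pairs by $(j-i)\bmod 3$, uses the alternating block $v_kv_{k+1}v_{k+3}$ (one $+1$ edge, one $-1$ edge, advancing $3$ with sign $0$) as the backbone, patches the other residues with short zero-sign detours (the residue-$1$ gadget $v_{j-1}v_{j-2}v_j$, matching your $v_k v_{k+2} v_{k+1}$, and a four-edge gadget through $v_{j+1}$ for residue $2$), and dispatches the boundary cases---including the genuine exception $\{1,6\}$ for $n=6$---by direct construction. The one caveat is that the steps you defer as ``bookkeeping'' are exactly where the paper's work lies: the residue-$2$ case needs a gadget that does not reuse vertices (two copies of your $+1$ gadget cannot simply be concatenated), and the pair $\{i,j\}=\{1,n\}$ with $n\equiv 0\bmod 3$ cannot be settled by a finite inspection per $n$ but needs a uniform construction (the paper uses the explicit path $v_1v_3v_2v_4v_5v_6v_8v_7v_9$ followed by a standard block path $P_{9,n}$), so your outline is correct but those constructions must still be supplied.
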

\begin{proof}
	Assume $n\ge 5$ and let $v_i,v_j\in V(P_n^2)$ with $i<j$.  We first claim that if $j-i\equiv 0\mod 3$ (possibly with $j=i$), then there exists a path $P_{i,j}$ from $v_i$ to $v_j$ such that $\sig(P_{i,j})=0$ and such that every $v_k\in V(P_{i,j})$ satisfies $i\le k\le j$ and $k-i\equiv 0,1\mod 3$.  Indeed, this follows by taking the path \[v_iv_{i+1}v_{i+3}v_{i+4}v_{i+6}\cdots v_{j-3}v_{j-2}v_j.\]
	
	The path $P_{i,j}$ shows that $d_\sig(v_i,v_j)=0$ if $j-i\equiv 0\mod 3$.  If $j-i\equiv 1\mod 3$  and $j\ne 2$, then one can take the path formed by $P_{i,j-1}$ concatenated with the path $v_{j-1}v_{j-2}v_j$, and we note that $1\le j-2\le n$ since $j\ne 2$ and that $v_{j-2}\notin P_{i,j-1}$ since $(j-2)-i\equiv 2\mod 3$.  If $j=2$ then we must have $i=1$ and we can take the path $v_1v_3v_2$ since $n\ge 3$.
	
	If $j-i\equiv 2\mod 3$ and $j\ne 3,n$, then we can take $P_{i,j-2}$ together with $v_{j-2}v_{j-3}v_{j-1}v_{j+1}v_j$.  If $j=3$ then $i=1$ and we can take the path $v_1v_2v_4v_5v_3$ because $n\ge 5$.  Thus it remains to show that $d_\sig(v_i,v_n)=0$ when $n-i\equiv 2\mod 3$.  By the symmetry of the signing, it suffices to show $d_\sig(v_1,v_j)=0$ when $j-1\equiv 2\mod 3$, and by the analysis we've already done this holds except possibly when $j=n$ and $n\equiv 0\mod 3$.  This proves the $n=6$ case, so we now only need to consider $n\ge 9$ with $n\equiv 0\mod 3$.  In this case we can show $d_\sig(v_1,v_n)=0$ by using the path $P=v_1v_3v_2v_4v_5v_6v_8v_7v_9$ (see Figure~\ref{fig:P1}) concatenated with $P_{9,n}$ (and by our claim, $P_{9,n}$ uses no internal vertices of $P$), proving the lemma.
	\begin{figure}
	\[
	\begin{tikzpicture}[scale=1]
		\node at (1,0) { $\bullet$};
		\node at (2,0) { $\bullet$};
		\node at (3,0) { $\bullet$};
		\node at (4,0) { $\bullet$};
		\node at (5,0) { $\bullet$};
		\node at (6,0) { $\bullet$};
		\node at (7,0) { $\bullet$};
		\node at (8,0) { $\bullet$};
		\node at (9,0) { $\bullet$};
		\node at (1,-.5) {$v_1$};
		\node at (2,-.5) {$v_2$};
		\node at (3,-.5) {$v_3$};
		\node at (4,-.5) {$v_4$};
		\node at (5,-.5) {$v_5$};
		\node at (6,-.5) {$v_6$};
		\node at (7,-.5) {$v_7$};
		\node at (8,-.5) {$v_8$};
		\node at (9,-.5) {$v_9$};
		
		\draw[dashed] (3,0) arc [radius=1, start angle=0, end angle=180];
		\draw (3,0)--(2,0);
		\draw[dashed] (4,0) arc [radius=1, start angle=0, end angle=180];
		\draw (4,0)--(5,0);
		\draw (5,0)--(6,0);
		\draw[dashed] (8,0) arc [radius=1, start angle=0, end angle=180];
		\draw[dashed] (9,0) arc [radius=1, start angle=0, end angle=180];
		\draw (8,0)--(7,0);
	\end{tikzpicture}
	\]
	\caption{The $v_1v_9$-path $P$ with $\sig(P)=0$.}\label{fig:P1}
\end{figure}
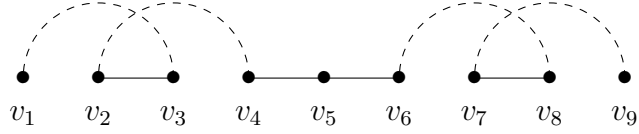
\end{proof}

Let $S_n$ denote the $n$-vertex star.  In this case $S_n^2=K_n$, so to prove Theorem~\ref{thm:square}, we will need the following lemma which shows that (in particular) $K_n$ is canceling for $n\ge 5$.

\begin{lem}\label{lem:Kn}
	If $n\ge 5$, then $K_n$ is 2-canceling.
\end{lem}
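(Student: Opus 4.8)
The plan is to invoke Observation~\ref{obs:k}: since $K_n$ has $n\ge 5>2+1$ vertices, a signing $\sig$ is $2$-canceling as soon as $W_\sig(K_n-w)=0$ for every single vertex $w$. So it suffices to exhibit one signing $\sig$ of $K_n$ with the property that, after deleting any one vertex $w$, every remaining pair $u,v$ is joined by a path of signed weight $0$ that avoids $w$.

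The cleanest such paths are those of length two: a path $uxv$ has $\sig(uxv)=\sig(ux)+\sig(xv)$, which is $0$ exactly when the two edges receive opposite signs. I would therefore reduce the whole statement to a purely combinatorial condition. Encode the signing by the graph $H$ of its positive edges, so $\sig(ij)=+1$ iff $ij\in E(H)$ and $-1$ otherwise. Then the set of useful intermediate vertices for a pair $u,v$, namely $X_{uv}:=\{x\ne u,v:\sig(ux)\ne\sig(xv)\}$, is exactly $(N_H(u)\triangle N_H(v))\sm\{u,v\}$. If $|X_{uv}|\ge 2$ for every pair, then deleting any single $w$ still leaves some $x\in X_{uv}\sm\{w\}$, and $uxv$ is a signed-weight-$0$ path in $K_n-w$; hence $W_\sig(K_n-w)=0$ for all $w$, finishing the proof.

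It remains to choose $H$ so that every symmetric difference of neighborhoods, minus its two endpoints, has size at least $2$. I would take $H$ to be a Hamiltonian cycle $v_1v_2\cdots v_nv_1$, i.e. make the edges of a spanning cycle positive and all other edges negative, so $N_H(v_i)=\{v_{i-1},v_{i+1}\}$. One then bounds $|X_{uv}|$ by cases on the cyclic distance between $u$ and $v$: for distance at least $3$ the four cycle-neighbors are distinct and disjoint from $\{u,v\}$, giving $|X_{uv}|=4$; for adjacent pairs and for distance-$2$ pairs one computes the symmetric difference explicitly and finds it has size exactly $2$ after discarding $u$ and $v$.

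The only real obstacle is this last case analysis, since the adjacent and distance-$2$ pairs are precisely where $|X_{uv}|$ attains its minimum value of $2$; I would check that for all $n\ge 5$ no index collisions $\bmod\ n$ push these counts below $2$ (the dangerous coincidences all require $n\le 4$). Because the guaranteed bound is $2$ rather than $1$, the construction survives the deletion of one vertex, which is exactly what being $2$-canceling asks for, and this is also what distinguishes this argument from a proof that $K_n$ is merely canceling.
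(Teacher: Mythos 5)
Your proof is correct and is essentially the paper's own argument: the paper uses exactly the same signing (positive on a Hamiltonian cycle, negative on all other edges), the same reduction via Observation~\ref{obs:k} to deleting a single vertex, and the same length-two paths consisting of one positive and one negative edge. The only difference is bookkeeping --- the paper fixes the deleted vertex as $v_n$ by symmetry and exhibits explicit midpoints for each pair, whereas you observe that every pair has at least two admissible midpoints (the symmetric difference of cycle-neighborhoods), so any single deletion leaves one; these are interchangeable presentations of the same verification.
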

\begin{proof}
	Let $V(K_n)=\{v_1,\ldots,v_n\}$ and consider the signing $\sig$ which has $\sig(v_iv_{i+1})=+1$ for all $1\le i\le n$ (with the indices written mod $n$) and $\sig(e)=-1$ for all other edges $e$.  By Observation~\ref{obs:k} and the symmetry of the problem, it suffices to show that $W_\sig(K_n-S)=0$ where $S=\{v_n\}$.
	
	Consider any $v_i,v_j\in K_n-S$ with $i<j$.  If $i\ne 1$ then the path $v_iv_{i-1}v_j$ in $K_n-S$ shows that $d_\sig(v_i,v_j)=0$, and if $j\ne n-1$ then the path $v_iv_{j+1}v_j$ works.  Thus it suffices to show $d_\sig(v_1,v_{n-1})=0$, and this works by considering $v_1v_2v_{n-1}$ because $n\ge 5$.
\end{proof}
We now move on to prove our general theorem.

\begin{proof}[Proof of Theorem~\ref{thm:square}]
	By Observation~\ref{obs:span} it suffices to prove the result when $G=C_n^2$ or $G$ is the square of an $n$-vertex tree.  One can check by hand that the signings in Figure~\ref{fig:squareSign} show that $G=C_7^2$ is 2-canceling and $G=P_6^2$ is canceling.
	\begin{figure}
		\[
		\begin{tikzpicture}
			\node at (0,0) {$\bullet$};
			\node at (-2,-1) {$\bullet$};
			\node at (-2,-2) {$\bullet$};
			\node at (2,-1) {$\bullet$};
			\node at (2,-2) {$\bullet$};
			\node at (-1,-3) {$\bullet$};
			\node at (1,-3) {$\bullet$};
			
			\draw[dashed] (0,0)--(2,-1);
			\draw[dashed] (0,0)--(2,-2);
			\draw[dashed] (1,-3)--(2,-1);
			\draw[dashed] (-1,-3)--(-2,-2);
			\draw[dashed] (-2,-1)--(-2,-2);
			\draw (2,-1)--(2,-2);
			\draw (2,-2)--(1,-3);
			\draw (1,-3)--(-1,-3);
			\draw (0,0)--(-2,-1);
			\draw (0,0)--(-2,-2);
			\draw (1,-3)--(-2,-2);
			\draw (-2,-1)--(-1,-3);
			\draw (-1,-3)--(2,-2);
			\draw (2,-1)--(-2,-1);

			\node at (4.8,0) {$\bullet$};
			\node at (6.2,0) {$\bullet$};
			\node at (7,-1.5) {$\bullet$};
			\node at (6.2,-3) {$\bullet$};
			\node at (4.8,-3) {$\bullet$};
			\node at (4,-1.5) {$\bullet$};
			
			\draw[dashed] (6.2,0)--(6.2,-3);
			\draw[dashed] (6.2,-3)--(4,-1.5);
			\draw[dashed] (7,-1.5)--(4.8,-3);
			\draw (6.2,0)--(7,-1.5);
			\draw (7,-1.5)--(6.2,-3);
			\draw (4.8,-3)--(6.2,-3);
			\draw (4.8,-3)--(4,-1.5);
			\draw (4,-1.5)--(4.8,0);
			\draw (4.8,0)--(4.8,-3);
		\end{tikzpicture}
		\]
		\caption{Signings showing that $C_7^2$ is 2-canceling and $P_6^2$ is canceling.}\label{fig:squareSign}
	\end{figure}
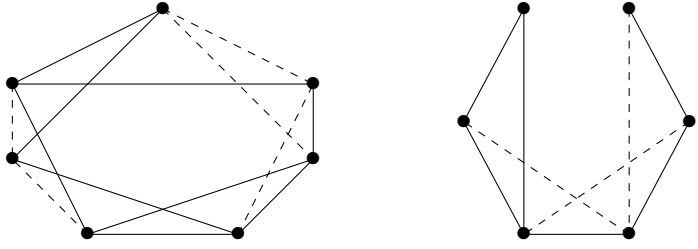
	
	Consider the case $G=C_n^2$.  Note that $C_5^2=K_5$, so this is 2-canceling by Lemma~\ref{lem:Kn}.  Thus it remains to prove the result for $n\ge 6$ with $n\ne 7$. In this case we let $\sig$ be the signing with $\sig(e)=+1$ if and only if $e\in C_n$.  By Observation~\ref{obs:k} it suffices to prove $W_\sig(C_n^2-v)$ for any vertex $v$.  Observe that $(C_n^2-v,\sig)$ contains as a spanning subgraph the signed graph $(P_{n-1}^2,\sig')$ with $\sig'$ the signing from Lemma~\ref{lem:squarePath}.  Because $W_{\sig'}(P_{n-1}^2)=0$ for $n\ge 6$ and $n\ne 7$, this implies that $W_\sig(C_n^2-v)=0$ for these values of $n$.
	
	It remains to prove the result when $G=T^2$ for some $n$-vertex tree $T$. If $T$ is the $n$-vertex star $S_n$, then $T^2=K_n$ and the result is given by Lemma~\ref{lem:Kn}.  Thus in what follows we assume that $T$ is an $n$-vertex tree with $n\ge 5$ such that $T$ is not $P_6$ nor a star $S_n$.
	
	Take $\sig$ to be the signing with $\sig(e)=+1$ if and only if $e\in T$.  Let $u,v$ be arbitrary vertices in $T$, and let $Q$ be any maximal path in $T$ containing $u,v$, say with $Q=x_1\cdots x_m$ and $x_i=u,x_j=v$.  Observe that the signed graph $(Q^2,\sig)$ is isomorphic to the signed graph $(P_m^2,\sig')$ in Lemma~\ref{lem:squarePath}, so by the lemma we have $d_\sig(u,v)=0$ unless possibly $m\le 4$ or $m=6$ and $\{i,j\}=\{1,6\}$.
	
	First note that $m\ge 3$ since $Q$ is a maximal path and $T$ is a tree on at least 3 vertices.  If $Q=x_1x_2x_3$, then because $n\ge 4$, there must exist some vertex $w\notin Q$ which is adjacent to some $x_i$ vertex.  Note that $x_1,x_3$ are leaves because $Q$ is maximal, so we must have that $w$ is adjacent to $x_2$.  Because $n\ge 5$ and the assumption that no vertex in $T$ has degree $n-1$ (i.e., $T\ne S_n$), some vertex $w'$ must be at distance 2 from $x_2$ in $T$, and without loss of generality we can assume $w'$ is adjacent to $w$, see Figure~\ref{fig:Q}(a).  We can then consider the paths $x_1wx_2,\ x_2wx_3$, and $x_1ww'x_2x_3$ to see that $d_\sig(x_a,x_b)=0$ for all $1\le a<b\le 3$, and in particular $d_\sig(u,v)=0$.
	
	\begin{figure}
		\[
		\begin{tikzpicture}
			\node at (-1,0) {$\bullet$};
			\node at (0,0) {$\bullet$};
			\node at (1,0) {$\bullet$};
			\node at (0,1) {$\bullet$};
			\node at (0,2) {$\bullet$};
			\node at (-1.2,-.5) {$x_1$};
			\node at (0,-.5) {$x_2$};
			\node at (1.2,-.5) {$x_3$};
			\node at (-.5,1) {$w$};
			\node at (-.5,2) {$w'$};
			\node at (0,-2) {$(a)$};
			
			\draw (-1,0)--(0,0);
			\draw (1,0)--(0,0);
			\draw (0,1)--(0,0);
			\draw (0,2)--(0,0);
			\draw[dashed] (-1,0)--(0,1);
			\draw[dashed] (1,0)--(0,1);
			\draw[dashed] (0,2) arc [radius=1, start angle=90, end angle=-90];
			\draw[dashed] (-1,0) arc [radius=1, start angle=180, end angle=360];

			\node at (4,0) {$\bullet$};
			\node at (5,0) {$\bullet$};
			\node at (6,0) {$\bullet$};
			\node at (7,0) {$\bullet$};
			\node at (3.8,-.5) {$x_1$};
			\node at (5,-.5) {$x_2$};
			\node at (6.2,-.5) {$x_3$};
			\node at (7,-.5) {$x_4$};
			\node at (5,1) {$\bullet$};
			\node at (4.5,1) {$w$};
			\node at (5.5,-2) {$(b)$};
			
			\draw (4,0)--(5,0);
			\draw (6,0)--(5,0);
			\draw (6,0)--(7,0);
			\draw (5,1)--(5,0);
			\draw[dashed] (4,0)--(5,1);
			\draw[dashed] (6,0)--(5,1);
			\draw[dashed] (4,0) arc [radius=1, start angle=180, end angle=360];
			\draw[dashed] (5,0) arc [radius=1, start angle=180, end angle=0];
		\end{tikzpicture}
		\]
		\caption{(a) The case $Q=x_1x_2x_3$.  (b) The case $Q=x_1x_2x_3x_4$.}\label{fig:Q}
	\end{figure}
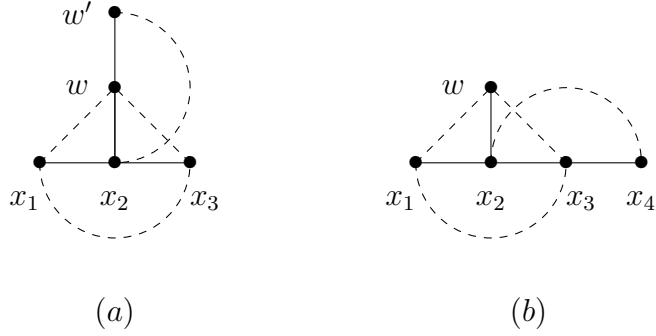            
	
	Now assume $Q=x_1x_2x_3x_4$.  Again $n\ge 5$ implies that there must exist some other vertex $w$, and by the symmetry of the problem (and that $x_1,x_4$ are leaves) we can assume $w$ is adjacent to $x_2$, see Figure~\ref{fig:Q}(b).   Now we can use the paths \[x_1wx_2,\ x_1wx_2x_4x_3,\ x_1x_3x_4,\ x_2x_1x_3,\ x_2x_1wx_3x_4,\ x_3x_2x_4\]
	to see that again $d_\sig(u,v)=0$.
	
	Finally, we consider the case $Q=x_1\cdots x_6$ with, say, $u=x_1,v=x_6$.  Because $T\ne P_6$ there exists some other vertex $w$, which without loss of generality we can assume is adjacent to either $x_2$ or $x_3$.  In the former case we consider the path $x_1wx_2x_4x_3x_5x_6$, and in the latter case we use $x_1x_2wx_3x_5x_4x_6$.  Either case shows $d_\sig(u,v)=0$, completing the proof.
\end{proof}

Before closing this section, we note that the condition on $n$ in Theorem~\ref{thm:square} is tight:  $P_4^2$ is $K_4$ minus an edge which is not canceling, and similarly $C_4^2=K_4$ is not 2-canceling.

\section{Proof of Theorem~\ref{thm:necessary}}\label{sec:necessary}
It is clear that a canceling graph must be connected.  This and the following lemma shows that bipartite graphs on at least 2 vertices can not be canceling.
\begin{lem}\label{lem:bipartite}
	If $G$ is a bipartite graph with bipartition $U\cup V$, then $W_\sig(G)\ge |U||V|$ for every signing $\sig$.
\end{lem}
\begin{proof}
	Let $\sig$ be a signing of $G$, $u\in U$, and $v\in V$.  We claim that $d_\sig(u,v)\ge 1$.  Indeed, because $G$ is bipartite, every path from $u$ to $v$ in $P$ has an odd number of edges, and hence $|\sig(P)|$ will be an odd integer.  In particular, it will be at least 1.  Thus
	\[W_\sig(G)=\half \sum_{x,y\in V(G)} d_\sig(x,y)\ge \sum_{u\in U,\ v\in V} d_\sig(u,v)\ge |U||V|,\]
	proving the lemma.
\end{proof}
A similar argument gives the following.
\begin{lem}\label{lem:deg1}
	If $G$ has $\ell$ vertices of degree 1, then $W_\sig(G)\ge \ell$ for every signing $\sig$.
\end{lem}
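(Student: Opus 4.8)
The plan is to mimic the proof of Lemma~\ref{lem:bipartite}: instead of using a parity obstruction to force a collection of signed distances to be at least $1$, here I would exploit that a degree-1 vertex has an essentially unique path to its neighbor. Fix a signing $\sig$, and let $v$ be a vertex of degree $1$ with unique neighbor $w_v$. Since $vw_v$ is the only edge incident to $v$, every $vw_v$-path must begin with this edge, and it cannot continue any further without revisiting $w_v$. Hence the single edge $vw_v$ is the only $vw_v$-path, so that $d_\sig(v,w_v)=|\sig(vw_v)|=1$ because $\sig$ takes values in $\{\pm 1\}$.

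Next I would rewrite $W_\sig(G)=\half\sum_{u,v}d_\sig(u,v)$ as the sum of $d_\sig$ over \emph{unordered} pairs of vertices, and then lower bound this sum by keeping only the $\ell$ pairs of the form $\{v,w_v\}$ as $v$ ranges over the degree-1 vertices of $G$. By the previous step each such pair contributes at least $1$, so once these $\ell$ pairs are known to be distinct we obtain $W_\sig(G)\ge \ell$, as desired.

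The one place the argument must be checked carefully — and the main (if small) obstacle — is the distinctness of the pairs $\{v,w_v\}$. Two distinct degree-1 vertices $v_1,v_2$ yield the same unordered pair precisely when $w_{v_1}=v_2$ and $w_{v_2}=v_1$, i.e.\ when $\{v_1,v_2\}$ is an isolated edge-component of $G$; in every other case $w_v$ has degree at least $2$, so a shared neighbor of several leaves still produces distinct pairs $\{v_1,w\},\{v_2,w\},\ldots$ and the map $v\mapsto\{v,w_v\}$ is injective on degree-1 vertices. Thus, apart from the degenerate isolated-edge situation (which does not arise for the connected graphs on at least three vertices to which the lemma is applied), the $\ell$ pairs are distinct and the bound $W_\sig(G)\ge\ell$ follows; note that in all cases $\ell\ge 1$ already forces $W_\sig(G)\ge 1>0$, which is exactly what is needed in the proof of Theorem~\ref{thm:necessary}(c).
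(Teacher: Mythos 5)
Your proof is correct and follows essentially the same route as the paper: the edge from a degree-1 vertex to its unique neighbor is the only path between those two vertices, hence has signed distance exactly $1$ under any signing, and summing over the $\ell$ leaves gives the bound. You are in fact slightly more careful than the paper, whose one-line summation glosses over the degenerate isolated-edge case (where the stated bound genuinely fails, e.g.\ $W_\sig(K_2)=1<2=\ell$), though as you observe this never matters in the lemma's application, where only $W_\sig(G)>0$ is needed and the graphs in question are connected.
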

\begin{proof}
	If $u$ has degree 1, let $x_u$ be its unique neighbor.  Observe that $ux_u$ is the only path from $u$ to $x_u$, so $d_\sig(u,x_u)=1$ for every signing $\sig$.  Summing this over all $u$ of degree 1 gives the result.
\end{proof}

Lemma~\ref{lem:deg1} shows that we only have to consider (connected) graphs with minimum degree at least 2.  The following lemma with $t=2$ shows that canceling graphs must contain a vertex of degree at least 3.

\begin{lem}\label{lem:theta}
	Let $G$ be a graph which consists of internally disjoint paths $P_1,\ldots,P_t$ that have the same endpoints $x$ and $y$.  If $t\le 3$, then $G$ is not canceling.  
\end{lem}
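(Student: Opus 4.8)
The plan is to assume for contradiction that some signing $\sig$ gives $W_\sig(G)=0$, which means $d_\sig(u,v)=0$ for \emph{every} pair, and to extract from this a strong constraint on the signed sums of the (at most three) cycles of $G$. I would dispose of $t=1$ immediately: there $G$ is a single $x$--$y$ path, whose endpoints have degree $1$, so Lemma~\ref{lem:deg1} already gives $W_\sig(G)\ge 2>0$ and $G$ is not canceling. For $t\in\{2,3\}$ the whole argument rests on one observation, applied edge by edge. Fix an edge $e=uv$. Because every internal vertex of each $P_i$ has degree $2$, once one leaves $u$ along $P_i\setminus\{e\}$ one is forced along that path until reaching $\{x,y\}$, and the only freedom is which of the remaining $x$--$y$ paths to use; hence every $uv$-path other than $e$ has the form $Q=E(C)\setminus\{e\}$ for a cycle $C$ through $e$, and these correspond bijectively to the cycles containing $e$. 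Writing $\sig(C)=\sum_{f\in E(C)}\sig(f)$ (note this edge-sum is orientation independent), we get $\sig(Q)=\sig(C)-\sig(e)$. So $d_\sig(u,v)=0$ forces $\sig(C)=\sig(e)$ for some cycle $C\ni e$; call this condition $(\star)$.

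For $t=2$, where $G$ is a single cycle $C$, condition $(\star)$ forces $\sig(e)=\sig(C)$ for every edge $e$, so all edges share one sign and $\sig(C)=\pm|E(C)|$; but $(\star)$ also gives $\sig(C)=\sig(e)=\pm1$, so $|E(C)|=1$, which is impossible. For $t=3$ (the theta graph) I would name the cycles $C_{ij}=P_i\cup P_j$, so $\sig(C_{ij})=\sig(P_i)+\sig(P_j)$. Since the only $x$--$y$ paths are $P_1,P_2,P_3$, the hypothesis $d_\sig(x,y)=0$ forces $\sig(P_i)=0$ for some $i$, say $\sig(P_1)=0$; then $|E(P_1)|$ is even and positive, so $P_1$ contains both a positive and a negative edge. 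Each edge of $P_1$ lies only on $C_{12}$ and $C_{13}$, so $(\star)$ together with $\sig(P_1)=0$ gives $\sig(e)\in\{\sig(P_2),\sig(P_3)\}$ for every $e\in E(P_1)$; as both signs occur this forces $\{\sig(P_2),\sig(P_3)\}=\{+1,-1\}$ and hence $\sig(C_{23})=\sig(P_2)+\sig(P_3)=0$.

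Then I would finish by applying $(\star)$ to $P_2$: its edges lie only on $C_{12}$ and $C_{23}$, with $\sig(C_{12})=\sig(P_2)=\pm1$ and $\sig(C_{23})=0$, so every edge of $P_2$ must have sign $\sig(P_2)$. This yields $\sig(P_2)=|E(P_2)|\,\sig(P_2)$, forcing $|E(P_2)|=1$; the identical argument applied to $P_3$ gives $|E(P_3)|=1$. But then $P_2$ and $P_3$ are both the single edge $xy$, contradicting that $G$ is a simple graph, and the proof is complete.

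I expect the main obstacle to be setting up $(\star)$ cleanly rather than the arithmetic that follows. The delicate points are checking that for \emph{every} edge---including the ones incident to $x$ or $y$, and the degenerate case of a length-one $P_i$---the alternate $uv$-paths are exactly the ``go around one cycle'' paths and match the cycles through $e$ one-to-one, and keeping the orientation-free identity $\sig(Q)=\sig(C)-\sig(e)$ straight (it is a sum of edge signs, not a flow, so there is no sign flip on reversal). Once $(\star)$ is established the remaining case analysis is entirely forced.
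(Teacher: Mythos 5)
Your proof is correct, and its engine is the same structural fact the paper exploits --- in a cycle or theta graph, every $uv$-path other than an edge $e=uv$ itself is a cycle through $e$ with $e$ deleted, so $W_\sig(G)=0$ translates into sign equations $\sig(C)=\sig(e)$ --- and both arguments open identically by forcing $\sig(P_1)=0$ from $d_\sig(x,y)=0$. The difference is in organization and in where the contradiction lands. The paper applies the mechanism ad hoc at chosen vertices: it computes $d_\sig(x',x)$ and $d_\sig(x',y)$ for the neighbor $x'$ of $x$ on $P_1$ to pin down $\sig(P_2)=+1$ and $\sig(P_3)=-1$ (note the second computation is not edge-local, since $x'y$ need not be an edge), then invokes simplicity \emph{up front} to find a path among $P_2,P_3$ with at least two edges, hence a positive edge $z_jz_{j+1}$ on it, and derives the contradiction there: every alternative $z_jz_{j+1}$-path has sign $\sig(P_{j'})-2\ne 0$. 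You instead isolate the mechanism as the uniform local condition $(\star)$ and apply it three times: to the edges of $P_1$ (which carry both signs) to conclude $\{\sig(P_2),\sig(P_3)\}=\{+1,-1\}$, then to the edges of $P_2$ and of $P_3$ to force every edge of $P_i$ to have sign $\sig(P_i)$, whence $|E(P_2)|=|E(P_3)|=1$, contradicting simplicity only at the very end. The two endgames are the same counting read in opposite directions (the paper: simple $\Rightarrow$ a long path $\Rightarrow$ a bad edge; you: no bad edge $\Rightarrow$ both paths short $\Rightarrow$ not simple). What your version buys is a clean, reusable principle that also dispatches $t=2$ in one line (all edges of the cycle would share the sign $\sig(C)$, forcing $|E(C)|=1$), where the paper handles $t\le 2$ implicitly by the nonexistence of the required alternative paths; your separate treatment of $t=1$ via Lemma~\ref{lem:deg1} is also valid. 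What the paper's version buys is a pinpointed witness: a single pair of adjacent vertices at which cancellation visibly fails.
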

This bound on $t$ is best possible: Figure~\ref{fig:theta} gives a signing of a graph consisting of 4 paths with common endpoints which is canceling.
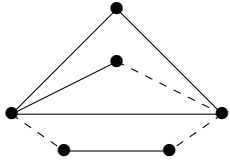
\begin{figure}
	\[
	\begin{tikzpicture}[scale=.7]
		\node at (0,0) {$\bullet$};
		\node at (4,0) {$\bullet$};
		\node at (2,1) {$\bullet$};
		\node at (1,-.7) {$\bullet$};
		\node at (3,-.7) {$\bullet$};
		\node at (2,2) {$\bullet$};
		
		\draw (0,0)--(4,0);
		\draw (0,0)--(2,1);
		\draw[dashed] (2,1)--(4,0);
		\draw[dashed] (0,0)--(1,-.7);
		\draw[dashed] (4,0)--(3,-.7);
		\draw (1,-.7)--(3,-.7);
		\draw (2,2)--(0,0);
		\draw (2,2)--(4,0);
	\end{tikzpicture}
	\]
	\caption{A canceling graph which consists of four paths with the same endpoints.}\label{fig:theta}
\end{figure}

\begin{proof}
	Assume for contradiction that there exists a signing $\sig$ of $G$ such that $d_\sig(u,v)=0$ for all $u,v\in V(G)$.  Intuitively our goal is to prove that $G$ must look roughly like the graph in Figure~\ref{fig:theta} after deleting the topmost path, which is not canceling because the two vertices in the bottom path have positive signed distance.
	
	It is not too hard to see that the only paths between $x$ and $y$ are the $P_i$ paths, so having $d_\sig(x,y)=0$ implies that $\sig(P_i)=0$ for some $i$, say with $i=1$.  Note that this means $P_1$ has a (positive) even number of edges.  
	
	Let $x'$ be the vertex in $P_1$ adjacent to $x$, and without loss of generality we assume $\sig(xx')=+1$.  It is not hard to see that the only paths from $x'$ to $x$ are the edge $xx'$ and the path $P_1-x$ concatenated with another one of the $P_j$ paths.  In particular, we must have $\sig(P_1-x)+\sig(P_j)=0$ for some $j\ne 1$, say $j=2$. Because $\sig(P_1)=0$ and $\sig(xx')=+1$, we have $\sig(P_1-x)=-1$ and hence $\sig(P_2)=+1$.  Similarly the only paths from $x'$ to $y$ are $P_1-x$ (which has $\sig(P_1-x)=-1\ne 0$) and paths which use $x'x$ together with one of the $P_{j'}$ paths, and the same reasoning as before implies that $\sig(P_3)=-1$.
	
	Because $G$ is a graph (and not a multigraph), at least one of $P_2$ and $P_3$ uses more than 1 edge.  Without loss of generality we can assume this holds for $P_3$.  Let us denote the vertices of $P_3$ by $z_1z_2\cdots z_p$, where $z_1=x$ and $z_p=y$.  Because $P_3$ has at least two edges and $\sig(P_2)=-1$, there must exist some $j$ such that $\sig(z_jz_{j+1})=+1$.  Observe that every path from $z_j$ to $z_{j+1}$ is either $z_jz_{j+1}$, or $z_jz_{j-1}\cdots z_1$ concatenated with some $P_{j'}$ and the path $z_{p}z_{p-1}\cdots z_{j+1}$.  Note that $\sig(z_jz_{j+1})=+1\ne 0$, and the $z_jz_{j+1}$-path $P$ using $P_{j'}$ has \[\sig(P)=\sig(P_{j'})+\sig(P_3)-\sig(z_jz_{j+1})=\sig(P_{j'})-2.\]  Because $\sig(P_1),\sig(P_2),\sig(P_3)\ne 2$, we have $d_\sig(z_j,z_{j+1})>0$, a contradiction.
\end{proof}

The last lemma we will need to show that the conditions of Theorem~\ref{thm:necessary} are necessary is the following.

\begin{lem}\label{lem:sharedVertex}
	Let $G_1,G_2$ be graphs such that $V(G_1)\cap V(G_2)=\{w\}$.  Then $G_1\cup G_2$ is canceling if and only if $G_1$ and $G_2$ are canceling.
\end{lem}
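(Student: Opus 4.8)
The plan is to use the fact that $w$ is the unique vertex shared by $G_1$ and $G_2$, so that $w$ is a cut vertex of $G := G_1\cup G_2$ separating $V(G_1)\sm\{w\}$ from $V(G_2)\sm\{w\}$. Note also that $G_1$ and $G_2$ share no edges (an edge needs two distinct endpoints, but the two graphs share only $w$), so any signing $\sig$ of $G$ restricts to signings $\sig_1,\sig_2$ of $G_1,G_2$, and conversely signings of $G_1,G_2$ glue to a single signing of $G$. The whole argument hinges on understanding which $uv$-paths of $G$ stay inside a single $G_i$.

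For the ($\Leftarrow$) direction, suppose $\sig_i$ is a canceling signing of $G_i$ for $i=1,2$, and let $\sig$ be the signing of $G$ restricting to each $\sig_i$. For $u,v$ both in $V(G_1)$ (or both in $V(G_2)$) we already have a zero-sum $uv$-path inside that $G_i$. For $u\in V(G_1)\sm\{w\}$ and $v\in V(G_2)\sm\{w\}$, I would take a $uw$-path $P_1$ in $G_1$ with $\sig(P_1)=0$ and a $wv$-path $P_2$ in $G_2$ with $\sig(P_2)=0$; since $P_1$ and $P_2$ meet only at $w$, their concatenation is a genuine (simple) $uv$-path with $\sig$-sum $0$. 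Hence $d_\sig(u,v)=0$ for all pairs, so $W_\sig(G)=0$ and $G$ is canceling.

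For the ($\Rightarrow$) direction, let $\sig$ be a canceling signing of $G$ and set $\sig_1=\sig|_{E(G_1)}$. The key observation is that if $u,v\in V(G_1)$ then every simple $uv$-path of $G$ lies entirely within $G_1$: a path visiting some $z\in V(G_2)\sm\{w\}$ would have to cross the cut vertex $w$ both on its $u$-to-$z$ segment and on its $z$-to-$v$ segment, forcing $w$ to appear twice, which contradicts simplicity. Consequently the $uv$-paths of $G$ are exactly the $uv$-paths of $G_1$, so $d_{G_1,\sig_1}(u,v)=d_{G,\sig}(u,v)=0$. Thus $\sig_1$ is a canceling signing of $G_1$, and by symmetry $G_2$ is canceling as well.

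The main obstacle, such as it is, lies in the ($\Rightarrow$) direction: one must argue rigorously that no zero-sum $uv$-path can exploit a detour through $G_2$, and the cleanest route is the cut-vertex/simplicity argument above rather than any manipulation of signs. The boundary cases where $u$ or $v$ equals $w$ need only a brief separate check (a path starting at $w$ cannot return to $w$, hence cannot re-cross into the other part), but these are routine.
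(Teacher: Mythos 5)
Your proof is correct and follows essentially the same approach as the paper: both directions hinge on the observations that signings of $G_1$ and $G_2$ glue to (and restrict from) signings of $G_1\cup G_2$, that paths between two vertices of $G_i$ must stay inside $G_i$, and that cross pairs are handled by concatenating zero-sum paths through $w$. The only cosmetic difference is that the paper packages the forward direction as the inequality $W_\sig(G_1\cup G_2)\ge W_\sig(G_1)+W_\sig(G_2)$, while you argue the contrapositive directly via the cut-vertex property, which you justify a bit more explicitly than the paper does.
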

\begin{proof}
	For any signing $\sig$, we claim that
	\[W_\sig(G_1\cup G_2)\ge W_\sig(G_1)+W_\sig(G_2).\]
	Indeed, consider any two vertices $u,v\in V(G_1)$.  We observe that every path between $u$ and $v$ must lie entirely in $G_1$, so the signed distance between these two vertices in $G_1\cup G_2$ is the same as their signed distance restricted just to $G_1$.  The same result holds for $G_2$, so in total this gives
	\[W_\sig(G_1\cup G_2)\ge \half \sum_{i=1,2}\sum_{u,v\in V(G_i)} d_\sig(u,v)=W_\sig(G_1)+W_\sig(G_2).\]
	In particular, if there exists no signing with, say, $W_\sig(G_1)=0$, then the inequality above shows that there exists no such signing for $G_1\cup G_2$ as well.
	
	Now assume there exist signings $\sig_i$ for $G_i$ such that $W_{\sig_i}(G_i)=0$.  Let $\sig$ be the signing of $G_1\cup G_2$ which has $\sig(e)=\sig_i(e)$ for $e\in G_i$ (this is well defined since $G_1,G_2$ have no edges in common).  With this we immediately have that $d_\sig(u,v)=0$ for $u,v\in V(G_i)$ and any $i$.  If $u_i\in V(G_i)$, then by assumption there exists a $u_iw$-path $P_i$ in $G_i$ with $\sig_i(P_i)=0$.  The concatenation of $P_1$ and $P_2$ shows that $d_\sig(u_1,u_2)=0$ for any $u_i\in V(G_i)$, and thus $W_\sig(G_1\cup G_2)=0$ as desired.
\end{proof}

We use the following lemma to construct a family of graphs which show that the conditions of Theorem~\ref{thm:necessary} are best possible.
\begin{lem}\label{lem:subdivision}
	Let $(G',\sig')$ be a signed graph with $W_{\sig'}(G')=0$ such that there exists a cycle $C'$ in $G'$ containing an edge $e$ with $\sum_{e'\in C'} \sig'(e')=-\sig'(e)$.  If $G$ is any graph obtained by replacing $e$ in $G'$ with a path on an odd number of edges, then $G$ is canceling.
\end{lem}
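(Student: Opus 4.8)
The plan is to keep $\sig'$ on every edge of $G'$ except $e$, and to sign the new odd path $P = x w_1 w_2 \cdots w_{2k} y$ (the subdivision of $e = xy$) so that $\sig(P) = \sig'(e)$. Parity makes this possible since $P$ has oddly many edges; in fact the only signing whose partial sums $\sig(x\cdots w_i)$ all lie in $\{0,\sig'(e)\}$ is the alternating one beginning and ending with $\sig'(e)$, and I would use exactly that signing. Its crucial feature is that every internal vertex reaches one endpoint of $e$ along $P$ by a zero-sign subpath: if $w_i$'s partial sum is $0$ it reaches $x$, and if it is $\sig'(e)$ it reaches $y$.

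First I would dispose of pairs $u,v \in V(G')$. Since $W_{\sig'}(G')=0$ there is a zero-sign $uv$-path in $G'$; if it avoids $e$ it survives verbatim, and if it uses $e$ then replacing that one edge by $P$ (same sign, new internal vertices) gives a simple zero-sign $uv$-path in $G$. For a pair $\{w_i,v\}$ with $v\in V(G')$, I would walk $w_i$ to whichever of $x,y$ it reaches with sign $0$ and then continue into $G'$ by a zero-sign path. The only wrinkle is that such a $G'$-path (say from $x$ to $v$) might traverse $e$; in that case I instead route $w_i$ to $y$ along $P$, which contributes sign $\sig'(e)$, and follow the $yv$-portion of that path, which lies in $G'-e$ and has sign $-\sig'(e)$, so the two cancel. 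A symmetric maneuver covers the case where $w_i$ reaches $y$ first.

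The crux is a pair of internal vertices $w_i,w_j$ with $i<j$ whose partial sums differ, for then the direct subpath of $P$ has sign $\pm\sig'(e)\ne 0$, and any alternative must leave $P$ through $x$ or $y$, cross $G'$, and re-enter through the other endpoint. The simplicity requirement (the left portion $x\cdots w_i$ and the right portion $w_j\cdots y$ must be vertex-disjoint) forces the exit to occur at $x$ and the re-entry at $y$, and a short sign computation then shows the connecting $xy$-detour must have sign exactly $-2\sig'(e)$. This is precisely what the hypothesis provides: the path $R := C'-e$ joins $x$ and $y$, avoids $e$, and satisfies $\sig'(R) = \sig'(C') - \sig'(e) = -2\sig'(e)$, so splicing $R$ between the two $P$-portions yields a simple zero-sign $w_iw_j$-path. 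The complementary sub-case, where exiting at $x$ and entering at $y$ should instead be joined by a \emph{zero}-sign $xy$-detour, is handled by the zero-sign $xy$-path guaranteed by $W_{\sig'}(G')=0$, which automatically avoids $e$ because $\sig'(e)\ne 0$. I expect this crossing internal--internal case to be the only real obstacle, and the cycle hypothesis to be used nowhere else.
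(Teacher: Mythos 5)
Your proposal is correct and takes essentially the same approach as the paper's proof: the identical alternating signing with $\sig(P)=\sig'(e)$, the same case split (both vertices in $V(G')$, one internal vertex of $P$, two internal vertices of $P$), and the same resolutions of the crossing internal--internal case, using $C'-e$ as the $-2\sig'(e)$ detour in one parity sub-case and a zero-sign $xy$-path (which necessarily avoids $e$) in the other. No gaps to report.
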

Examples of graphs and edges $e$ which satisfy these conditions can be found in Figure~\ref{fig:smalln}.
\begin{proof}
	Let $e=xy$ be an edge as in the hypothesis of the lemma and let $G$ be $G'$ after replacing $e$ with the path $P=xw_1\cdots w_{2i}y$ for some $i\ge 0$.  We let $w_0=x$ and $w_{2i+1}=y$.  Define a signing $\sig$ for $G$ by $\sig(e')=\sig'(e')$ if $e'\in G'$ and such that $\sig(w_jw_{j+1})=(-1)^{j}\sig'(e)$ for all $0\le j\le 2i$.  Observe that with this $\sgn(P)=\sgn'(e)$.   We claim that $W_\sig(G)=0$.
	
	Let $u,v\in V(G)$.  If $u,v\in V(G')$, then by assumption there exists some $uv$-path $Q'$ in $G'$ such that $\sig'(Q')=0$.  Let $Q$ be the path in $G$ obtained by replacing the edge $e$ in $Q'$ with the path $P$ (and if $e\notin Q'$ we let $Q=Q'$).  Because $\sgn(P)=\sgn'(e)$, we have $\sgn(Q)=\sgn'(Q')=0$, so $d_\sig(u,v)=0$ in this case.  
	
	Now consider the case $u=w_j$, and without loss of generality assume that $j$ is even.  If $v\in V(G')$, then there exists an $xv$-path $Q'_x$ in $G'$ with $\sig(Q'_x)=0$ (if $v=x$, then $Q'_x$ is the empty path).  If $Q'_x$ does not use the edge $e=xy$ then $P_{\le j}:=w_jw_{j-1}\cdots w_1 x$ concatenated with $Q'_x$ shows $d_\sig(u,v)=0$ (here $\sig(P_{\le j})=0$ because $j$ is even), and otherwise $e$ must be the first edge of $Q'_x$ (since the path starts at $x$) and one can concatenate $P_{\ge j}:=w_jw_{j+1}\cdots y$ with $Q'_x-e$ to show $d_\sig(u,v)=0$ by using $\sig(P_{\ge j})=\sig(P)-\sig(P_{\le j})=\sig'(e)$.

	Now assume $v=w_{j'}$. If $j'$ is even, then we can assume $j<j'$ and use the path $w_jw_{j+1}\cdots w_{j'}$ to show $d_\sig(w_j,w_{j'})=0$. Thus we can assume $v=w_{j'}$ with $j'$ odd.  If $j<j'$, then we can take the concatenation of the path $P_{\le j}$ together with $Q'_y$ (which does not use the edge $e$) and $P_{\ge j'}$ to show $d_\sig(w_j,w_{j'})=0$.  If $j'<j$, then take the concatenation of the paths $P_{\le j'}$, $Q:=C'-e$ (where $C'$ is as in the hypothesis), and $P_{\ge j}$.  Observe that \[\sig(P_{\le j'})+\sig(P_{\ge j})=\sig(P)-\sum_{t=j'}^{j-1} \sig(w_{t}w_{t+1})=\sig'(e)+\sig'(e)=2\sig'(e),\]
	where here we used that $j'$ is odd (so the first term of the sum is $-\sig'(e)$) and that $j$ is even (so that there are an odd number of terms in the alternating sum).  By hypothesis $\sig(Q)=-2\sig'(e)$, so in total this path gives $d_\sig(w_j,w_{j'})=0$, completing the proof.
\end{proof}

We now prove our main result for this section.

\begin{proof}[Proof of Theorem~\ref{thm:necessary}]
	Let $G$ be a canceling graph on at least 2 vertices.  It is clear that $G$ must be connected, and Lemmas~\ref{lem:bipartite} and \ref{lem:deg1} implies that $G$ contains an odd cycle and has minimum degree at least 2.  Given this, if $G$ had $n$ edges then $G$ would have to be a cycle, but this is impossible by Lemma~\ref{lem:theta} with $t=2$.  Thus $G$ must have at least $n+1$ edges.
	
	If $G$ had exactly $n+1$ edges, then because $G$ has minimum degree at least 2 it must either have exactly one vertex of degree 4 and the rest of degree 2, or exactly two vertices of degree 3 and the rest of degree 2.  It is not difficult to see that the former case (and $G$ being connected) implies that $G$ consists of two cycles sharing a common vertex, and this is not canceling by Lemmas~\ref{lem:sharedVertex} and \ref{lem:theta} for $t=2$.  The latter case implies that $G$ consists of three internally disjoint paths from the two vertices of degree 3, and in this case Lemma~\ref{lem:theta} with $t=3$ implies that $G$ is not canceling.  Thus $G$ must contain at least $n+2$ edges.

	To show that these conditions are best possible, consider the two graphs $G_0,G_1$ given in Figure~\ref{fig:smalln}.  It is not hard to see that these are canceling (in particular, $G_1=(P_5^2,\sig)$ with $\sig$ as in Lemma~\ref{lem:squarePath}), that the edges labeled $e$ satisfy the hypothesis of Lemma~\ref{lem:subdivision}, and that $|E(G_i)|=|V(G_i)|+2$ for $i=1,2$.  Thus for all even $n\ge 6$, one can use Lemma~\ref{lem:subdivision} to replace $e\in G_0$ by a path with $n-4$ new vertices to obtain  an $n$-vertex canceling graph $G$ which has minimum degree 2 and $n+2$ edges.  Similarly one can replace the edge $e\in G_1$ with a path to get the result for all odd $n\ge 5$, proving the result.
	\begin{figure}
		\[
		\begin{tikzpicture}
		\node at (0,0) {$\bullet$};
		\node at (1,0) {$\bullet$};
		\node at (0,1) {$\bullet$};
		\node at (1,1) {$\bullet$};
		\node at (-.4,.5) {$e$};
		\node at (.5,-1) {$G_1$};
		
		\draw (0,0)--(0,1);
		\draw (0,1)--(1,1);
		\draw (1,1)--(1,0);
		\draw[dashed] (0,0)--(1,1);
		\draw[dashed] (0,0)--(1,0);
		\draw[dashed] (1,0)--(0,1);

		\node at (3,0) {$\bullet $};
		\node at (3,1) {$\bullet $};
		\node at (5,0) {$\bullet $};
		\node at (5,1) {$\bullet $};
		\node at (4,2) {$\bullet$};
		\node at (3.5,.3) {$e$};
		\node at (4,-1) {$G_2$};
		
		\draw (3,0)--(3,1);
		\draw (3,1)--(4,2);
		\draw (4,2)--(5,1);
		\draw (5,1)--(5,0);
		\draw[dashed] (3,0)--(4,2);
		\draw[dashed] (3,1)--(5,1);
		\draw[dashed] (4,2)--(5,0);
		\end{tikzpicture}
		\]
		\caption{The graphs $G_1$ and $G_2$ and edges $e$ which satisfy Lemma~\ref{lem:subdivision}.}\label{fig:smalln}
	\end{figure}
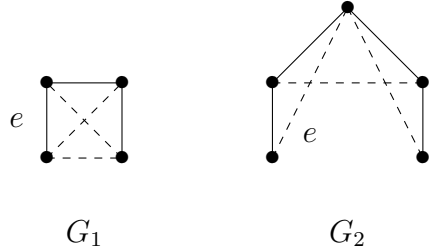
\end{proof}

\section{$k$-canceling Graphs}\label{sec:k}
By Observation~\ref{obs:span}, we see that there exists an $n$-vertex $k$-canceling graph if and only if $K_n$ is $k$-canceling, so as a first step it makes sense to try and understand when $K_n$ is $k$-canceling.  The best general bound we have for this is the following.  
\begin{prop}\label{prop:Knk}
	If $n\ge 2k+4$, then $K_n$ is $k$-canceling.
\end{prop}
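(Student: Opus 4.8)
The plan is to invoke Observation~\ref{obs:k}: since $K_n$ has $n\ge 2k+4>k$ vertices, it suffices to produce a single signing $\sig$ for which $W_\sig(K_n-S)=0$ for every $S\sub V(K_n)$ with $|S|=k-1$; equivalently, for every such $S$ and every pair $u,v\notin S$ I must exhibit a $uv$-path in $K_n-S$ with $\sig$-value $0$. The signing I would use is of bipartite type. Fix a partition $V(K_n)=X\sqcup Y$ with $|X|=\ceil{n/2}$ and $|Y|=\floor{n/2}$, and set $\sig(e)=+1$ if $e$ crosses the partition (one endpoint in $X$, one in $Y$) and $\sig(e)=-1$ if both endpoints lie in the same part. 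The hypothesis $n\ge 2k+4$ guarantees $|X|,|Y|\ge k+2$, and this slack is exactly what the argument will consume.

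The verification splits according to how $u,v$ sit in the partition. If $u\in X$ and $v\in Y$ (a crossing pair), then for any third vertex $w$ exactly one of the edges $uw,wv$ crosses the partition, so $\sig(uw)+\sig(wv)=0$ and the length-two path $uwv$ is a zero path; since $n-(k-1)-2=n-k-1\ge k+3\ge 1$ choices of $w$ survive the deletion of $S\cup\{u,v\}$, such a path lives in $K_n-S$. The subtle case is when $u,v$ lie in the same part, say $u,v\in X$: here there is a parity obstruction, since every length-two path $uwv$ has both edges crossing or both internal and hence $\sig$-value $\pm 2$, never $0$. To repair this I would use a length-four path $u\,y_1\,y_2\,x'\,v$ with distinct $y_1,y_2\in Y$ and $x'\in X\sm\{u,v\}$; its edges are crossing, internal, crossing, internal, giving $\sig$-values $+1,-1,+1,-1$ which sum to $0$.

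It then remains to check such vertices survive the deletion of $S$. One needs two vertices of $Y\sm S$ and one vertex of $X\sm(S\cup\{u,v\})$, and these are automatically distinct (two chosen distinct within $Y$, the third lying in $X$). Since $|Y\sm S|\ge \floor{n/2}-(k-1)\ge 3$ and $|X\sm(S\cup\{u,v\})|\ge \ceil{n/2}-(k-1)-2\ge 1$, both can be found; the case $u,v\in Y$ is handled symmetrically with a path $u\,x_1\,x_2\,y'\,v$. This covers every pair of surviving vertices, so $W_\sig(K_n-S)=0$ for all $S$ of size $k-1$, and $\sig$ is $k$-canceling. The main obstacle is precisely the same-part case: the parity obstruction forbids short connectors, and the length-four repair is what pins down the bound, since the counts $\floor{n/2},\ceil{n/2}\ge k+2$ are tight — when $\ceil{n/2}=k+2$ the inequality $\ceil{n/2}-(k-1)-2\ge 1$ holds with no room to spare, which is exactly where $n\ge 2k+4$ enters.
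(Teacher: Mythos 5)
Your proof is correct and takes essentially the same approach as the paper: the same signing (positive on edges crossing a bipartition with both parts of size at least $k+2$, negative inside the parts), length-two paths for crossing pairs, and length-four alternating paths for same-part pairs. The only cosmetic difference is that the paper packages this as a corollary of a slightly more general proposition (a bipartite graph of minimum degree $k+1$ with cliques added on each side), whereas you specialize directly to $K_n$.
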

We note that this bound is not tight, even for $k=1$.  Proposition~\ref{prop:Knk} is an immediate corollary of the following result by taking  $G'$ to be a complete bipartite graph on $U\cup V$ with $|U|=|V|=k+2$.
\begin{prop}
	Let $G'$ be a bipartite graph on $U\cup V$ with $|U|,|V|\ge k+2$ and minimum degree at least $k+1$.  Let $G$ be the graph obtained from $G'$ by adding every edge between two vertices of $U$ and every edge between two vertices of $V$.  Then $G$ is $k$-canceling.
\end{prop}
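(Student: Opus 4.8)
The plan is to exhibit a single explicit signing and show it is $k$-canceling by a short case analysis. Define $\sig$ by setting $\sig(e)=+1$ when $e$ is one of the bipartite edges (joining $U$ to $V$) and $\sig(e)=-1$ when $e$ is one of the added edges inside $U$ or inside $V$. Since $G$ has at least $2k+4\ge k+1$ vertices, Observation~\ref{obs:k} reduces the claim to showing $W_\sig(G-S)=0$ for every $S\sub V(G)$ with $|S|=k-1$; equivalently, that $d_\sig(x,y)=0$ for all $x,y\in V(G-S)$. First I would record two robustness facts that hold after deleting any such $S$: because $|U|,|V|\ge k+2$, each of $U$ and $V$ retains at least $3$ vertices in $G-S$, and because every vertex has at least $k+1$ bipartite neighbors, each surviving vertex still has at least $(k+1)-(k-1)=2$ bipartite neighbors in $G-S$. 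The cliques on $U$ and on $V$ survive as (smaller) cliques, since all of their internal edges are present.

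With these facts in hand, I would produce a zero-sum (equivalently, balanced) path in three cases. For $x,y$ in different parts, say $u\in U$ and $v\in V$, pick a surviving bipartite neighbor $z\in U$ of $v$ with $z\ne u$ (possible since $v$ keeps $\ge 2$ such neighbors); then the length-two path $u\,z\,v$ has $\sig=-1+1=0$, using the clique edge $uz$ and the bipartite edge $zv$. For $u_1,u_2\in U$, route through $V$: choose $v_1$ a surviving neighbor of $u_1$, then $v_3\ne v_1$ a surviving neighbor of $u_2$ (possible since $u_2$ keeps $\ge 2$ neighbors), and finally any $v_2\in V\sm\{v_1,v_3\}$ (possible since $V$ keeps $\ge 3$ vertices); the path $u_1\,v_1\,v_2\,v_3\,u_2$ has $\sig=+1-1-1+1=0$. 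The case $v_1,v_2\in V$ is identical after swapping the roles of $U$ and $V$. This handles every pair, giving $W_\sig(G-S)=0$ and hence the proposition.

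The conceptual work is entirely in choosing the signing so that the two edge-types contribute opposite signs; once that is fixed, the only thing left to verify is that the intermediate vertices of the three path templates can always be chosen \emph{distinctly} inside $G-S$. This is exactly where the hypotheses enter, and I expect it to be the main (if modest) obstacle: the bound ``$\ge 2$ surviving neighbors'' is what lets me force $v_3\ne v_1$ (respectively $z\ne u$), while the bound ``$\ge 3$ surviving vertices per part'' is what lets me insert the middle vertex $v_2$. Both inequalities are tight in the extremal case $|U|=|V|=k+2$ with $G'$ regular of degree $k+1$, so no slack is wasted; this also matches the remark that Proposition~\ref{prop:Knk} follows by taking $G'=K_{k+2,k+2}$, whence $G=K_{2k+4}$.
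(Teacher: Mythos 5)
Your proof is correct and takes essentially the same approach as the paper's: the identical signing (bipartite edges $+1$, intra-part clique edges $-1$) followed by a case analysis exhibiting explicit zero-sum paths of lengths $2$ and $4$, with the hypotheses $|U|,|V|\ge k+2$ and minimum degree $k+1$ supplying exactly the surviving vertices and neighbors needed. The only cosmetic difference is your template for two vertices in the same part: the paper routes $xzww'y$ through a third vertex $z$ of that same part, while you route $u_1v_1v_2v_3u_2$ entirely through the opposite part; both are valid and use the assumptions identically.
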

\begin{proof}
	Let $\sig$ be the signing such that $\sig(uv)=+1$ if and only if $u\in U$ and $v\in V$.  Let $x,y$ be arbitrary vertices of $G$ and $S$ a set of less than $k$ vertices.
	
	First consider the case $x\in U$ and $y\in V$.  Because $x$ has at least $k+1$ neighbors in $V$, it has some neighbor $z\in V-S-\{y\}$, and in this case the path $xzy$ shows $d_\sig(x,y)=0$.  Now suppose $x,y\in U$.  Because $|U-S|\ge 3$, there exists some $z\in U-S-\{x,y\}$.   As argued before, $z$ has a neighbor $w\in V-S$ and $y$ has a neighbor $w'\in V-S-\{w'\}$.  Then the path $xzww'y$ shows $d_\sig(x,y)=0$.  A symmetric argument gives the result if $x,y\in V$, completing the proof.
\end{proof}

Another family of examples can be obtained from blowups of odd cycles.  If $G$ is a graph on $\{v_1,\ldots,v_t\}$, then the \textit{$(n_1,\ldots,n_t)$-blowup} of $G$ is defined to be the $t$-partite graph on sets $V_1,\ldots,V_t$ with $|V_i|=n_i$ and with $u\in V_i$ and $w\in V_j$ adjacent if and only if $v_i\sim v_j$ in $G$.

\begin{prop}
	Let $G$ be the $(n_1,\ldots,n_{2t+1})$-blowup of a cycle $C_{2t+1}$ with $t\ge 1$.  If $n_i\ge 2k$ for all $i$, then $G$ is $k$-canceling.
\end{prop}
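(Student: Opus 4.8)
The plan is to invoke Observation~\ref{obs:k} (applicable since $|V(G)|=\sum_i n_i\ge 6k>k$) to reduce the claim to exhibiting a single signing $\sig$ with $d_\sig(u,w)=0$ for every pair $u,w\in V(G-S)$ and every $S\sub V(G)$ with $|S|=k-1$. I would use the \emph{uniform alternating} signing: with the parts ordered cyclically as $V_1,\ldots,V_{2t+1}$ (indices mod $2t+1$), set $\sig(e)=(-1)^{i+1}$ for every edge $e$ joining $V_i$ to $V_{i+1}$. Then consecutive super-edges have opposite signs everywhere except at $V_1$, where the two incident super-edges ($V_{2t+1}V_1$ and $V_1V_2$) are both positive; call this the \emph{defect}. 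Summing signs once around the cycle gives $\sum_{i=1}^{2t+1}(-1)^{i+1}=1$, so the cycle is unbalanced, which is exactly the flexibility the odd cycle provides. The central tool is a lifting principle: since each $|V_i|\ge 2k$ and $|S|<k$, every part retains at least $k+1\ge 2$ undeleted vertices, so any walk in $C_{2t+1}$ visiting each part at most twice can be realized as a vertex-disjoint path in $G-S$ with prescribed endpoints, by choosing distinct available vertices greedily (consecutive choices lie in adjacent parts and are automatically adjacent in $G-S$).

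With this in hand it suffices to produce, for each pair of parts, a zero-sum walk in $C_{2t+1}$ visiting each part at most twice, and I would organize this by the parity of the indices. If $u\in V_i$ and $w\in V_j$ lie in distinct parts with $i\equiv j\pmod 2$, the monotone arc $v_iv_{i+1}\cdots v_j$ (which never crosses the defect) has even length and perfectly alternating signs, hence signed sum $0$, and visits each part once. If $u,w$ lie in the same part, a length-four out-and-back $v_iv_{i+1}v_{i+2}v_{i+1}v_i$ along two alternating super-edges has signed sum $0$ and visits each part at most twice, where one chooses the direction so as to avoid the defect pair at $V_1$.

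The crux is the opposite-parity case $i\not\equiv j\pmod 2$, where no monotone arc can have even length. Here I would use the complementary (wrap-around) arc, whose signed sum equals $1$ minus the signed sum of the short arc; a short computation shows this is $0$ when the smaller-indexed endpoint has odd index (in which case the complementary arc is itself a zero-sum simple path) and $2$ otherwise. In the latter subcase I would prepend a back-and-forth $u\to x\to u'$ across a \emph{negative} super-edge incident to the even-indexed endpoint (such a super-edge points to a part left unused by the complementary arc, which runs in the opposite direction), contributing $-2$ and giving total signed sum $0$ while visiting each part at most twice. This step is where both the oddness of the cycle and the slack in the bound $n_i\ge 2k$ are genuinely used, consistent with Theorem~\ref{thm:necessary} forcing an odd cycle; it is the main obstacle, and the one point demanding care is checking that the back-and-forth never creates a third visit to any part, in particular when the parts are adjacent ($j=i+1$), where the freed part coincides with $V_j$ but is still used at most twice. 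Combining the three cases with the lifting principle gives $d_\sig(u,w)=0$ throughout $G-S$, hence $W_\sig(G-S)=0$, completing the proof.
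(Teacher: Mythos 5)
Your proposal is correct, but it takes a genuinely different route from the paper. The paper's signing uses the internal structure of the parts: each $V_i$ is split into two halves $V_i^0,V_i^1$ of size at least $k$, and an edge between adjacent parts is positive exactly when it joins halves with the same label; zero-sum paths are then built by zig-zagging between the two half-labels to pick up negative edges and then running along a fixed label to pick up positive ones. Your signing is instead constant on each ``super-edge'' $V_iV_{i+1}$, alternating around the cycle with one defect at $V_1$, so it is really a signing of $C_{2t+1}$ lifted to the blowup; your zero-sum paths come from parity of arcs, the wrap-around arc (using that the total sign around the odd cycle is $+1$), and a local back-and-forth correction across a negative super-edge. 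I checked the sign computations (short arc sum $(-1)^{i+1}$, complementary arc sum $1+(-1)^i$, corrections contributing $\pm 2$) and the at-most-two-visits-per-part bookkeeping, including the delicate adjacent case $j=i+1$ and the direction choice avoiding the defect in the same-part case, and they all hold. The trade-off is worth noting: the paper's argument needs every half $V_i^j$ to survive the deletion of $S$, which is exactly where $n_i\ge 2k$ is used, whereas your lifting principle only needs two surviving vertices per part, i.e.\ $n_i\ge k+1$ would suffice; so your argument in fact proves a stronger statement, which is relevant to the paper's own remark that the bound on $n_i$ might be improvable for larger $k$. What the paper's signing buys in exchange is symmetry: it has no distinguished part, so one may freely relabel (e.g.\ assume $u\in V_1$ and the target part has odd index) and the case analysis is purely positional, rather than split by parity together with a wrap-around argument.
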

By Theorem~\ref{thm:necessary}(b), it is necessary that we take the blowup of an odd cycle. The uniform bound $n_i\ge 2k$ cannot be weakened when $k=1$ since Theorem~\ref{thm:necessary}(d) shows that $C_{2t+1}$ (which is the $(1,\ldots,1)$-blowup of $C_{2t+1}$) is not canceling.  It may be possible to improve the bound on $n_i$ for larger $k$.
\begin{proof}
	Let $V_1\cup \cdots \cup V_{2t+1}$ be the partition of $G$ such that $|V_i|=n_i$ and every vertex of $V_i$ is adjacent to every vertex of $V_{i-1}\cup V_{i+1}$.  Further partition each set $V_i$ arbitrarily into two sets $V_i^0,V_i^1$ each of size at least $k$.  Let $\sig$ be the signing which has $\sig(uv)=+1$ if and only if $u\in V_i^j$ and $v\in V_{i+1}^j$ for some $i,j$.
	
	Let $u,v\in V(G)$ and let $S$ be an arbitrary set of less than $k$ vertices.   Define $U_i^j=V_i^j-S$, and note that each of these sets contain some vertex $u_i^j$.  By relabeling our parts we can assume $u\in U_1^0$.   If $v\in U_1^1$, then the path $uu_{2}^0u_1^{1}u_{2}^1v$ shows $d_\sig(u,v)=0$, see Figure~\ref{fig:U}(a). If $v\in U_1^0$, then the path $uu_{2}^0v$ works.
	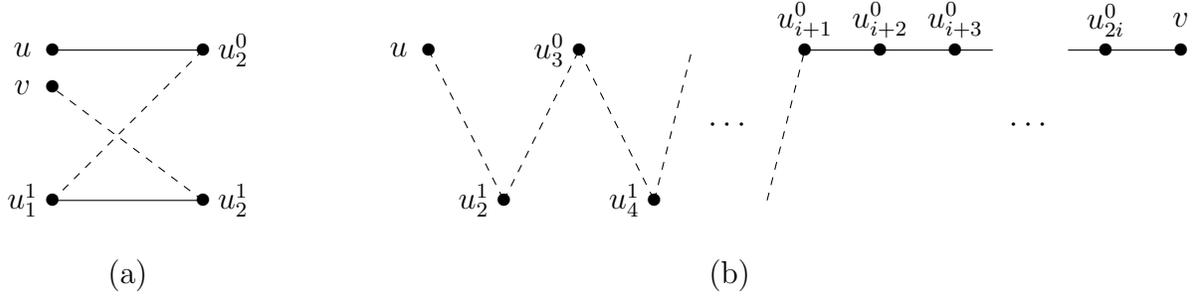
\begin{figure}
		\[\begin{tikzpicture}
			\node at (0,0) {$\bullet$};
			\node at (0,-.5) {$\bullet $};
			\node at (2,0) {$\bullet$};
			\node at (2,-2) {$\bullet$};
			\node at (0,-2) {$\bullet$};
			\node at (-.4,0) {$u$};
			\node at (-.4,-.5) {$v$};
			\node at (-.4,-2) {$u_1^1$};
			\node at (2.4,0) {$u_2^0$};
			\node at (2.4,-2) {$u_2^1$};
			\node at (1,-3) {(a)};
			
			\draw (0,0)--(2,0);
			\draw[dashed] (2,0)--(0,-2);
			\draw (0,-2)--(2,-2);
			\draw[dashed] (2,-2)--(0,-.5);
			
			\node at (5,0) {$\bullet$};
			\node at (4.6,0) {$u$};
			\node at (6,-2) {$\bullet$};
			\node at (5.6,-2) {$u_2^1$};
			\node at (7,0) {$\bullet$};
			\node at (6.6,0) {$u_3^0$};
			\node at (8,-2) {$\bullet$};
			\node at (7.6,-2) {$u_4^1$};
			\node at (9,-1) {$\cdots$};
			\node at (10,0) {$\bullet$};
			\node at (10,.4) {$u_{i+1}^0$};
			\node at (11,0) {$\bullet$};
			\node at (11,.4) {$u_{i+2}^0$};
			\node at (12,0) {$\bullet$};
			\node at (12,.4) {$u_{i+3}^0$};
			\node at (13,-1) {$\cdots$};
			\node at (14,0) {$\bullet$};
			\node at (14,.4) {$u_{2i}^0$};
			\node at (15,0) {$\bullet$};
			\node at (15,.4) {$v$};
			\node at (9,-3) {(b)};
			
			\draw[dashed] (5,0)--(6,-2);
			\draw[dashed] (6,-2)--(7,0);
			\draw[dashed] (7,0)--(8,-2);
			\draw[dashed] (8,-2)--(8.5,0);
			\draw[dashed] (9.5,-2)--(10,0);
			\draw (10,0)--(11,0);
			\draw (12,0)--(11,0);
			\draw (12,0)--(12.5,0);
			\draw (13.5,0)--(14,0);
			\draw (14,0)--(15,0);
		\end{tikzpicture}\]
	\caption{(a) The path $uu_{2}^0u_1^{1}u_{2}^1v$ when $u,v\in U_1^0$.  (b) The path to $v\in U_{2i+1}^0$ with $i$ even.}\label{fig:U}
	\end{figure}
,	
	
	Thus we can assume $v\in U_i^j$ for some $i\ne 1$, and possibly be relabeling our parts again we can assume\footnote{This is essentially because there is a path of even length between any two vertices of $C_{2t+1}$.}  $v\in U_{2i+1}^j$ for some $i$.  If $j$ has the same parity as $i$, then any path of the form\footnote{Here if $p\le i+1$, then the $p$th vertex of the path appears in $U_{p}^{q}$ with $p,q$ having opposite parities, so in particular the $i+1$st vertex will be in $U_{i+1}^j$ as we have implicitly claimed.} \[uu_2^1u_3^0u_4^1u_5^0\cdots u_i^{1-j}u_{i+1}^ju_{i+2}^j\cdots u_{2i}^jv\] 
	shows $d_\sig(u,v)=0$ since the first $i$ edges are all given negative signs while the latter are given positive signs, see Figure~\ref{fig:U}(b).  If $i,j$ have different parities, then one can use a path $P$ as above to reach $u_{2i+1}^{1-j}$, and then one can go to $u_{2i}^j$ and then to $v$ (note that the only vertex of $P$ in $U_{2i}$ is in $U_{2i}^{1-j}$, so $u_{2i}^j$ is a new vertex and this genuinely defines a path).  This completes the proof.
\end{proof}

\section{$r$-colored Graphs}\label{sec:r}
The study of canceling graphs has a natural extension to $r$-colored graphs.
\begin{defn}
	Given an edge coloring $\chi:E(G)\to [r]$, we say that a path $P$ is \textit{canceling} if $|\{e\in P:\chi(e)=i\}|=|\{e\in P:\chi(e)=j\}|$ for all $i,j$.  That is, every color is used the same number of times in $P$.  We say that an edge coloring $\chi:E(G)\to [r]$ is \textit{$(r,k)$-canceling} if for any $u,v\in V(G)$ and set $S\sub V(G)\sm\{u,v\}$ of size less than $k$, there exists a canceling path $P$ from $u$ to $v$ in $G-S$.  We say that a graph $G$ is \textit{$(r,k)$-canceling} if there exists an $(r,k)$-canceling edge coloring $\chi$ of $G$.
\end{defn}
Observe that $(2,k)$-canceling graphs are exactly $k$-canceling graphs since $d_\sig(u,v)=0$ is equivalent to the existence of a $uv$-path which uses each ``color'' $\pm 1$ the same number of times.  

A few of the basic lemmas we have established for $k$-canceling graphs extend to $(r,k)$-canceling graphs, but for the most part the proofs of our main results do not carry over to this setting.  For example, it may be the case that $P_n^r$ is $(r,1)$-canceling for $n$ sufficiently large in terms of $r$, but the natural analog of the coloring of Lemma~\ref{lem:squarePath} (namely, the one which gives two vertices color $i$ if they are at distance $i$ in $P_n$) can quickly be shown not to work for $r=3$, so new ideas would be needed here.

As a first step towards exploring these problems, we establish a bound for $K_n$.

\begin{prop}\label{prop:Knr}
	If $n\ge 3(k-1)(r-1)$ with $r\ge 3$ and $k\ge 2$, then $K_n$ is $(r,k)$-canceling.
\end{prop}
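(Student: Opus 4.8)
The plan is to generalize the coloring idea that proved the earlier propositions for $k$-canceling graphs to the $r$-color setting. Recall that for $K_n$ to be $(r,k)$-canceling, I need a coloring $\chi:E(K_n)\to [r]$ such that for any $u,v$ and any deleted set $S$ of size less than $k$, there is a $uv$-path in $K_n-S$ using every color the same number of times. Such a canceling path necessarily has length divisible by $r$, with exactly $m$ edges of each color for some $m\ge 0$. The most natural strategy is to partition most of the vertex set into $r$ large color-classes $W_1,\ldots,W_r$ and color an edge by the class of one of its endpoints in a structured way, so that a canceling path can be built by stepping through the classes cyclically: take one edge of color $1$, then one of color $2$, and so on up to color $r$, and repeat. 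The bound $n\ge 3(k-1)(r-1)$ should be exactly what is needed to guarantee each class has enough ``fresh'' vertices left after deleting $S$ to complete such a walk.

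First I would fix the coloring explicitly. A clean choice is to split $V(K_n)$ into $r$ blocks $W_1,\ldots,W_r$ of nearly equal size and declare $\chi(xy)=i$ whenever the higher-indexed endpoint lies in $W_i$ (or some similar rule depending only on block membership), so that traversing from one block to the next uses a predictable color. I would then verify that each block has size at least roughly $n/r \ge 3(k-1)$, which after removing the at-most-$(k-1)$ vertices of $S$ and the endpoints $u,v$ still leaves at least two unused vertices in each block. Next, given $u$ and $v$, I would construct the canceling path as a cyclic tour: starting at $u$, move into $W_1$, then $W_2$, \ldots, then $W_r$, and repeat this cycle enough times, each full cycle contributing exactly one edge of each color, so the total is automatically balanced. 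The endpoints $u,v$ themselves contribute a fixed short correction which I would handle by choosing where to insert them so that their incident edges' colors cancel out, possibly by prepending or appending a short balancing segment.

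The key step is a careful counting argument: I must check that at each stage of the walk there is an available (undeleted, unvisited) vertex in the required block. Each block is visited at most $m+O(1)$ times where $m$ is the number of color repetitions, and since a canceling path on $K_n$ can be kept short (length exactly $r$ suffices for a single cycle plus corrections), I expect $m$ can be taken to be $1$ or $2$, so each block is entered only a bounded number of times. The factor $3(k-1)$ per block is what ensures that even after deleting the $k-1$ vertices of $S$, subtracting the two endpoints, and using up the bounded number of interior vertices per block, nothing runs out; I would make this explicit by a greedy selection, at each step picking any vertex of the needed block avoiding $S$, the endpoints, and previously chosen vertices.

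I expect the main obstacle to be the bookkeeping of the two endpoints $u$ and $v$, which may lie in arbitrary (possibly equal, possibly distinct) blocks, and whose incident edges have colors forced by the coloring rule. The ``balanced cycle'' construction naturally produces canceling paths between vertices in prescribed blocks, but $u,v$ are adversarial, so I would need a case analysis on the block memberships of $u$ and $v$ (and whether they coincide), inserting a short fixed-length detour to correct the color imbalance created by the first and last edges. This is where the assumption $r\ge 3$ and $k\ge 2$ should be used: with at least three colors there is enough flexibility to route a short correction segment through a third block, and $k\ge 2$ guarantees the path has genuine freedom in vertex selection. Once the endpoint cases are dispatched, the greedy vertex-availability argument driven by the $3(k-1)(r-1)$ bound finishes the proof.
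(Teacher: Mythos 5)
Your block-partition coloring is a genuinely different idea from the paper's, but it has a gap that is fatal at the stated bound, and it surfaces already at $k=2$. Take the natural reading of your rule: order the vertices, split them into nearly equal blocks $W_1,\ldots,W_r$, and give an edge the color of the block containing its higher-indexed endpoint, so an edge between $W_a$ and $W_b$ with $a\le b$ gets color $b$. Then color $1$ occurs \emph{only} on edges with both endpoints in $W_1$, and every edge incident to $W_r$ has color $r$. Now take distinct $u,v\in W_r$: every $uv$-path begins and ends with a color-$r$ edge, so a canceling path must use each color at least \emph{twice}, and in particular must contain two distinct edges inside $W_1$, hence at least three vertices of $W_1$ avoiding $S$. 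But at the extreme of the hypothesis, $k=2$ and $n=3(r-1)$, nearly equal blocks have size at most $3$ (for $r=3$, exactly $2$), so this is impossible: with $r=3$, $n=6$, blocks of size $2$, color $1$ lives on a single edge and no canceling path between the two vertices of $W_3$ exists even with $S=\emptyset$. This also falsifies your intermediate claims: $n/r=3(k-1)(r-1)/r<3(k-1)$, not $\ge 3(k-1)$; two surviving vertices per block is not enough (the case above needs three in $W_1$); and ``length exactly $r$ suffices'' is wrong, since endpoints in the top block force length at least $2r$. The cyclic-tour picture fails too: under the max-block rule a descent out of $W_r$ repeats color $r$ and a tour never produces color $1$, while under the alternative rule (color $i$ on $W_i$--$W_{i+1}$ edges) a path between vertices in different blocks traverses one cyclic arc once more often than the complementary arc, so pure tours are never balanced --- the ``short balancing segment'' you defer is exactly the hard part.

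The structural reason the approach strains against the bound is a budget mismatch: a block coloring must keep all $r$ color classes alive and usable after $k-1$ adversarial deletions, which costs on the order of $k+O(1)$ vertices in \emph{every} block, roughly $kr$ in total with nontrivial constants, and for small $k$ (certainly $k=2,3$) this cannot be squeezed into $3(k-1)(r-1)$ with equal blocks; whether some delicate non-uniform block sizing could work is unclear, but nothing in your write-up supplies it. The paper sidesteps this entirely: it colors one long cycle on $m=3(k-1)(r-1)$ vertices periodically with colors $1,\ldots,r-1$ and gives \emph{every other edge} of $K_n$ the single ``cheap'' color $r$. Deletions can then only damage the cycle, and pigeonhole guarantees an arc of at least $3(r-1)$ consecutive surviving vertices; a canceling path enters and exits that arc by two color-$r$ edges and traverses $2(r-1)$ consecutive cycle edges in between (each of colors $1,\ldots,r-1$ exactly twice), with a short case analysis when $u$ or $v$ lies on the arc itself. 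In other words, the factor $3(k-1)$ in the bound pays for pigeonhole on a single cycle, not for making $r$ separate color classes individually deletion-robust; to recover the stated bound you would need that concentration trick or a substitute for it.
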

\begin{proof}
	Let $m=3(k-1)(r-1)$ and let $V(K_n)=\{x_1,\ldots,x_{m},y_{m+1},\ldots,y_n\}$.  For $i<m$ let $\chi(x_ix_{i+1})=j$ where $j\equiv i\mod(r-1)$, let $\chi(x_mx_1)=r-1$, and let $\chi(e)=r$ for all other edges.   That is, we form a cycle on the $x_{1},\ldots,x_{m}$ vertices which cycles through the first $r-1$ colors a total of $3(k-1)$ times, and we use the last color $r$ for every other edge, see Figure~\ref{fig:r} for the case $r=3$, $k=2$, and $n=6$.
	\begin{figure}
		\[\begin{tikzpicture}
			\node at (0,0) {$\bullet$};
			\node at (1,0) {$\bullet$};
			\node at (-.5,1) {$\bullet$};
			\node at (1.5,1) {$\bullet$};
			\node at (0,2) {$\bullet$};
			\node at (1,2) {$\bullet$};
			
			\draw (0,0)--(1,0);
			\draw[dashed] (1,0)--(1.5,1);
			\draw (1.5,1)--(1,2);
			\draw[dashed] (1,2)--(0,2);
			\draw (0,2)--(-.5,1);
			\draw[dashed] (-.5,1)--(0,0);
			\draw[densely dotted] (0,0)--(0,2);
			\draw[densely dotted] (0,0)--(1.5,1);
			\draw[densely dotted] (0,0)--(1,2);
			\draw[densely dotted] (1,0)--(0,2);
			\draw[densely dotted] (1.5,1)--(0,2);
			\draw[densely dotted] (1.5,1)--(-.5,1);
			\draw[densely dotted] (1,0)--(-.5,1);
			\draw[densely dotted] (1,2)--(-.5,1);
			\draw[densely dotted] (1,0)--(1,2);
		\end{tikzpicture}\]
	\caption{A 3-colored $K_6$.}\label{fig:r}
	\end{figure}
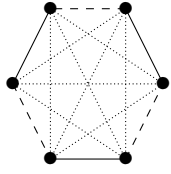  
	
	Let $u,v\in V(G)$ and let $S\not\ni u,v$ be a set of less than $k$ vertices.   For ease of presentation we will only consider the case that $S$ contains a vertex of $\{x_1,\ldots,x_{m}\}$ (the other case being strictly easier to analyze), and without loss of generality we can assume $x_{m}\in S$. Write $S\cap \{x_1,\ldots,x_m\}=\{x_{i_1},x_{i_2},\ldots,x_{i_p}\}$ with $i_j<i_{j+1}$ for all $j$ (and hence $i_p=m$).  By letting $i_0:=0$, we see that $\sum_{j=1}^p i_{j}-i_{j-1}=m$, so one of these at most $k-1$ terms must have size at least $m/(k-1)\ge 3r-3$.  Without loss of generality we can assume this holds for $j=1$, which means that $x_1,\ldots,x_{3r-4}\notin S$.  In particular, $x_1,\ldots,x_{2r-1}\notin S$ since $r\ge 3$, so we can always use these vertices to construct canceling paths in $K_n-S$.
	
	First assume\footnote{Note that we could have $x_{2r}\in S$, but in this case we trivially have $u,v\ne x_{2r}$.} $u,v\notin\{x_1,x_2,\ldots,x_{2r}\}$.  In this case the path $ux_1\cdots x_{2r-1}v$ is $r$-canceling (in particular, $u\ne x_{m}\in S$, so $\chi(ux_1)=r$).   If $u=x_i$ with $1\le i\le r$ and $v\notin \{x_1,\ldots,x_{2r}\}$, then one can consider the path $x_ix_{i+1}\cdots x_{i+r-1}v$.  If $u=x_i$ with $r\le i\le 2r$ and $v\notin \{x_1,\ldots,x_{2r}\}$, then one can consider the path $x_{i}x_{i-1}\cdots x_{i-r+1}v$.

	It remains to deal with the case $u=x_i,v=x_j$ with $i,j\in \{1,2,\ldots,2r\}$.  Without loss of generality we can assume $i<j$.   If $i\ge r$ then we can consider the path $x_ix_{i-1}\cdots x_{i-r+1}x_j$, and similarly if $j\le 2r-3$ we can consider the path $x_jx_{j+1}\cdots x_{j+r-1}x_i$.  Thus we can assume $i<r\le 2r-3<j$.  If $i+r<j$ then we could consider the path $x_i\cdots x_{i+r-1}x_j$, so in total we must have \[2r-2\le j\le i+r\le 2r-1.\]
	
	If $i=r-1$, then both paths $x_{r-1}x_{r-2}\cdots x_1 x_{2r-2}x_{2r-1}$ and $x_{r-1}x_{r-2}\cdots x_1 x_{2r-1}x_{2r-2}$ are canceling (since $x_1\cdots x_{r}$ uses each of the first $r-1$ colors and $x_{2r-2}x_{2r-1}$ has the same color as $x_{r-1}x_r$), see Figure~\ref{fig:rPath} for the case $r=3$.  Thus in this case there is a canceling path from $x_i$ to $x_j$ regardless of whether $j=2r-2$ or $j=2r-1$.  So we can assume $i=r-2$ and hence $j=2r-2$.  In this case we can consider the path $x_{2r-2}x_{2r-1}\cdots x_{3r-4}x_{r-1}x_{r-2}$ and again this is canceling.  We conclude that the coloring $\chi$ is $(r,k)$-canceling, proving the result.
	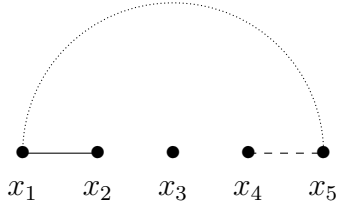
\begin{figure}
		\[
	\begin{tikzpicture}
		\node at (0,0) {$\bullet$};
		\node at (1,0) {$\bullet$};
		\node at (2,0) {$\bullet$};
		\node at (3,0) {$\bullet$};
		\node at (4,0) {$\bullet$};
		\node at (0,-.5) {$x_1$};
		\node at (1,-.5) {$x_2$};
		\node at (2,-.5) {$x_3$};
		\node at (3,-.5) {$x_4$};
		\node at (4,-.5) {$x_5$};
		
		\draw (0,0)--(1,0);
		\draw[dashed] (3,0)--(4,0);
		\draw[densely dotted] (4,0) arc [radius=2, start angle=0, end angle=180];
	\end{tikzpicture}
		\]
		\caption{The path from $x_{2}=x_{r-1}$ to $x_{4}=x_{2r-2}$ when $r=3$.  Note that $x_2x_3$ is a dashed edge, $x_3x_4$ is a solid edge, and all the other missing edges are dotted.}\label{fig:rPath}
	\end{figure}
\end{proof}

\section{Concluding Remarks}\label{sec:concluding}
There are many directions one could consider for further research.  One is to continue to explore $k$-canceling and more generally $(r,k)$-canceling graphs.  In particular, it would be nice to know when $K_n$ is $(r,k)$-canceling, since there exist $n$-vertex $(r,k)$-canceling graphs if and only if $K_n$ is $(r,k)$-canceling.

\begin{quest}
	Let $n_{r,k}$ be the smallest integer such that $K_n$ is $(r,k)$-canceling for all $n\ge n_{r,k}$.  What is $n_{r,k}$ (approximately)?  In particular, what are $n_{2,k}$ and $n_{r,1}$ (asymptotically) equal to?
\end{quest}
For example, Propositions~\ref{prop:Knk} and \ref{prop:Knr} show that $n_{r,k}\le 3kr$ for all $k,r\ge 1$.   It is not difficult to show that $n_{2,1}=4,\ n_{2,2}=5$, and $n_{2,3}=7$ by using Lemma~\ref{lem:Kn}, Proposition~\ref{prop:Knk}, and some casework for small $n$. 
Our best general bounds for $r=2$ are the following.
\begin{prop}
	For $k\ge 5$ we have
	\[k+\log_4(k)\le n_{2,k}\le 2k+4.\]
\end{prop}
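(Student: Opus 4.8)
The upper bound is immediate: Proposition~\ref{prop:Knk} states that $K_n$ is $k$-canceling whenever $n\ge 2k+4$, so by the definition of $n_{r,k}$ we get $n_{2,k}\le 2k+4$. The rest of the plan is devoted to the lower bound. I recall that being $(2,k)$-canceling is the same as being $k$-canceling, so a signing of $K_n$ is just a $2$-coloring of $E(K_n)$ and a canceling path is one with $\sig(P)=0$.

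The plan for the lower bound is to produce, for a suitable $n$, a single obstruction that defeats \emph{every} signing at once, via Ramsey's theorem. The key observation is that a monochromatic clique is maximally un-canceling: if $A\sub V(K_n)$ is a set of $m$ vertices all of whose internal edges receive the same sign, then for any distinct $u,v\in A$ every $uv$-path lying inside $A$ consists of edges of a single sign, so $|\sig(P)|$ equals its length and is therefore at least $1$. Hence no canceling $uv$-path lies inside $A$. Consequently, setting $S=V(K_n)\sm A$ gives $|S|=n-m$ and $K_n-S=K_m$ with no canceling path between any two of its vertices. Choosing $m=n-(k-1)$ makes $|S|=k-1<k$, so the mere existence of a monochromatic $K_m$ certifies that the signing is not $k$-canceling.

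I would then invoke Ramsey's theorem to force such a clique for every signing simultaneously. Set $m=\ceil{\log_4 k}$ and $n=k-1+m$, so that $n-m=k-1$. Using the standard bound $R(m,m)\le \binom{2m-2}{m-1}\le 4^{m-1}$ together with $4^{m-1}\le 4^{\log_4 k}=k\le k-1+m=n$, every $2$-coloring of $E(K_n)$ contains a monochromatic $K_m$. By the previous paragraph this means no signing of $K_n$ is $k$-canceling, so $K_n$ fails to be $k$-canceling for this value of $n$, and therefore $n_{2,k}\ge n+1=k+m=k+\ceil{\log_4 k}\ge k+\log_4 k$. The hypothesis $k\ge 5$ is used only to guarantee $m=\ceil{\log_4 k}\ge 2$, so that a monochromatic $K_m$ actually contains a pair of distinct vertices.

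The main obstacle is conceptual rather than computational: the crux is isolating the monochromatic-clique obstruction, after which the argument is short, the only quantitative input being the bound $R(m,m)\le 4^{m-1}$, which is exactly what converts the Ramsey threshold into the $\log_4$ term. I expect the real difficulty—and the reason the two bounds are so far apart—to be that this obstruction is crude, since it rules out signings only through fully monochromatic subgraphs; closing the gap between $k+\log_4 k$ and $2k+4$ would require a genuinely different idea on at least one of the two sides.
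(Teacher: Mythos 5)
Your proof is correct and follows essentially the same route as the paper: the upper bound is quoted from Proposition~\ref{prop:Knk}, and the lower bound uses the same Ramsey-theoretic obstruction, namely that deleting all but a monochromatic clique of size $\ge 2$ (guaranteed by $R(m,m)\le 4^{m-1}$) leaves a set of fewer than $k$ vertices whose removal defeats any signing. Your write-up is in fact slightly more careful than the paper's, since you handle the integrality of $\lceil\log_4 k\rceil$ and the role of the hypothesis $k\ge 5$ explicitly.
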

\begin{proof}
	The upper bound follows from Proposition~\ref{prop:Knk}.  A well known result in Ramsey theory \cite{ES} says that any 2-coloring of $K_n$ contains a monochromatic clique on at least $\log_4(n)$ vertices (or equivalently, if $n\ge 4^t$, then any 2-coloring of $K_n$ contains a monochromatic $K_t$).  Thus if $n=k-1+\log_4(k)$, then any signing $\sig$ of $K_n$ yields a monochromatic clique on a set $T$ of size at least $\log_4(k)$.  Taking $S=V(K_n)\sm T$ (which is a set of size less than $k$) gives that $G-S$ is a monochromatic clique on at least 2 vertices, and hence $W_\sig(G-S)>0$.  Thus there exists no $k$-canceling signing of $K_n$ as desired.
\end{proof}
It is not difficult to improve this lower bound with a more careful analysis, but ultimately we do not know how to a prove a stronger bound than $k+\Om(\log k)$.  For $k=1$ and large $r$, we know essentially nothing beyond the upper bound $n_{r,1}\le 3(r-1)$ given by Proposition~\ref{prop:Knk}, though we think $n_{r,1}$ may be asymptotically smaller than this.

While this paper focused primarily on generalizing \v{S}olt{\'e}s' problem to signed graphs, there are many other classical problems in the study of Wiener indices that can be asked in this setting.  For example, a well known result for Wiener indices is that if $T$ is an $n$-vertex tree, then $W(S_n)\le W(T)\le W(P_n)$ where $S_n,P_n$ are the $n$-vertex star and path graphs, respectively.  One could ask for similar bounds in the signed setting, and we suspect the following is true.
\begin{conj}\label{conj:tree1}
	If $(T,\sig)$ is a signed $n$-vertex tree, then
	\[W_\al(P_n)\le W_\sig(T)\le W_+(P_n),\]
	where $+$ is the constant signing that assigns $+1$ to every edge of $P_n$, and $\al$ is the alternating signing which assigns the first edge of the path $+1$, the second $-1$, the third $+1$, and so on.
\end{conj}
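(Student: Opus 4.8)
The upper bound is the easier half. Since $T$ is a tree, between any $u,v$ there is a unique $uv$-path $P_{uv}$, so $d_\sig(u,v)=|\sig(P_{uv})|\le |E(P_{uv})|=d_T(u,v)$, the inequality holding because each edge contributes $\pm1$ to the sum. Summing over all pairs gives $W_\sig(T)\le W(T)$, and the classical extremal bound $W(T)\le W(P_n)=W_+(P_n)$ (recalled in the paragraph preceding the conjecture) finishes this half; here $W_+(P_n)=W(P_n)$ because the all-$+1$ signing makes signed distance agree with ordinary distance.

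For the lower bound, first compute the target: under $\al$ one has $\al(P_{v_iv_j})=\sum_{k=i}^{j-1}(-1)^{k+1}\in\{0,\pm1\}$, so $d_\al(v_i,v_j)=0$ when $i\equiv j\pmod 2$ and $d_\al(v_i,v_j)=1$ otherwise; hence $W_\al(P_n)=\ceil{n/2}\floor{n/2}=\floor{n^2/4}$. Thus the claim is that every signed $n$-vertex tree satisfies $W_\sig(T)\ge\floor{n^2/4}$. The first tool is a parity observation: flipping one edge sign changes $\sig(P_{uv})$ by $\pm2$, so $d_\sig(u,v)\equiv d_T(u,v)\pmod 2$. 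Consequently every pair at odd tree-distance contributes at least $1$, which is exactly the content of Lemma~\ref{lem:bipartite} and yields $W_\sig(T)\ge |A||B|$, where $A\cup B$ is the bipartition of $T$. This already suffices when $T$ is balanced, but for $|A|\ne|B|$ one has $|A||B|<\floor{n^2/4}$, so the remaining task is to recover the deficit $\floor{n^2/4}-|A||B|$ from the pairs at even distance (the same-part pairs), whose signed distances lie in $\{0,2,4,\dots\}$.

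The plan for the lower bound is induction on $n$ via leaf deletion. If $u$ is a leaf with neighbour $x$, then every path avoiding $u$ is unchanged, so $W_\sig(T)=W_{\sig}(T-u)+\sum_{v\ne u}d_\sig(u,v)$, and the induction hypothesis gives $W_\sig(T-u)\ge\floor{(n-1)^2/4}$. Since $\floor{n^2/4}-\floor{(n-1)^2/4}=\floor{n/2}$, it suffices to exhibit a leaf $u$ with $\sum_{v\ne u}d_\sig(u,v)\ge\floor{n/2}$. Writing $d_\sig(u,v)=|\sig(ux)+\sig(P_{xv})|$ and using that $\sig(P_{xv})$ is even precisely for the $v$ lying in the same part as $x$, each such $v$ forces $|\sig(ux)+\sig(P_{xv})|\ge 1$; hence $\sum_{v\ne u}d_\sig(u,v)$ is at least the size of the part containing $x$. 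Choosing $u$ to be a leaf lying in the \emph{minority} part makes $x$ lie in the majority part, whose size is $\ge\ceil{n/2}\ge\floor{n/2}$, and the induction closes.

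The main obstacle is precisely the existence of such a leaf: the argument above breaks down exactly for trees all of whose leaves lie in the majority part, equivalently trees in which every vertex of the minority part has degree at least $2$ (stars, double stars, and spiders are the extreme cases). For these the single-leaf increment (the part-size of $x$) can be as small as $1$, far below $\floor{n/2}$, even though the true value of $W_\sig(T)$ is large (for the star one checks $W_\sig(S_n)\ge (n-1)^2/2$). I expect that handling this case requires either a strengthened inductive invariant that tracks the bipartition sizes $|A|,|B|$ and charges same-part pairs quantitatively against the imbalance $\big\lvert |A|-|B|\big\rvert$, or a more global argument bounding $\sum_{\text{same-part}}d_\sig(u,v)$ from below by the deficit $\floor{n^2/4}-|A||B|$ directly; making one of these robust across all imbalanced trees is the crux of the conjecture.
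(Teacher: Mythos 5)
You should know at the outset that the statement you attempted is stated in the paper as a \emph{conjecture}, not a theorem: the paper offers no proof of it, remarking only that $W_+(P_n)=W(P_n)$, so that the upper bound follows from the classical inequality $W(S_n)\le W(T)\le W(P_n)$, and that ``it remains to prove that the lower bound holds.'' Your upper-bound argument is correct and is essentially the paper's own observation, with the small added (and valid) point that on a tree the path $P_{uv}$ is unique, so $d_\sig(u,v)=|\sig(P_{uv})|\le d_T(u,v)$ edge by edge. Your computation $W_\al(P_n)=\floor{n/2}\ceil{n/2}=\floor{n^2/4}$ is also correct, as is the parity reduction: $d_\sig(u,v)\equiv d_T(u,v)\pmod 2$, so Lemma~\ref{lem:bipartite} gives $W_\sig(T)\ge |A||B|$ for the bipartition $A\cup B$ of $T$, which settles every tree with a balanced bipartition and correctly isolates imbalanced trees as the real content of the conjecture.

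The genuine gap is in the lower bound, and it is the gap you flag yourself: the leaf-deletion induction needs, at every step, a leaf $u$ whose deletion costs at least $\floor{n/2}$, and your estimate for that cost (the size of the part containing the leaf's neighbour $x$) is only large enough when $u$ lies in the \emph{minority} part. Trees in which every minority-part vertex has degree at least $2$ admit no such leaf; the star $S_n$ is the extreme case (its minority part is the centre alone), and double stars and spiders are nearby cases. For these trees your per-leaf estimate degenerates to a constant, even though the conjecture still holds for them: for $S_n$ with $p$ positive and $n-1-p$ negative edges one computes $W_\sig(S_n)=(n-1)+2\binom{p}{2}+2\binom{n-1-p}{2}\ge \frac{(n-1)^2}{2}\ge\floor{n^2/4}$ for $n\ge 4$, but all of that weight sits on pairs at \emph{even} tree-distance, which your induction never charges. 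Nothing in the proposal recovers the deficit $\floor{n^2/4}-|A||B|$ for general imbalanced trees, so the crux the paper leaves open is exactly the crux you leave open; the proposal is honest partial progress, not a proof. It is worth noting that the family you identify as problematic is essentially the family (double stars) that the paper's subsequent conjecture singles out as extremal for the minimum signed Wiener index $W_*$, which is consistent with your diagnosis that these imbalanced trees are where the difficulty genuinely lives.
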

Note that $W_+(P_n)=W(P_n)$, so the upper bound follows from the result for classical Wiener indices, and it remains to prove that the lower bound holds.

Another related statistic for a graph one could consider is the \textit{minimum signed Wiener index} $W_*(G):=\min_\sig(G)$, where the minimum ranges over all signings $\sig$ of $G$.  This statistic is analogous to the minimum digraph Wiener index of all orientations of a graph $G$ defined by Knor, Majstrovi\'c, and \v{S}krevoski~\cite{Digraph}.  Again one could ask for the extremal values of $W_*(T)$ when $T$ is an $n$-vertex tree.    To state our conjecture regarding this, we say that a tree $T$ is a \textit{double star} if there exist vertices $x,y\in V(T)$ such that every edge of $T$ uses at least one of the vertices $x$ or $y$. For example, the star $S_n$ is a double star since every edge is incident to the center of the star $x$.
\begin{conj}
	If $T$ is an $n$-vertex tree, then \[W_*(P_n)\le W_{*}(T)\le \max_{D\in \c{D}}W_*(D),\]
	where $\c{D}$ is the set of all $n$-vertex double stars.
\end{conj}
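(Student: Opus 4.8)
The plan is to treat the two inequalities separately, first pinning down the extremal value $W_*(P_n)$. As $P_n$ is bipartite with parts of sizes $\floor{n/2}$ and $\ceil{n/2}$, Lemma~\ref{lem:bipartite} gives $W_\sig(P_n)\ge \floor{n/2}\ceil{n/2}$ for every signing, and the alternating signing $\al$ (for which $d_\al(v_i,v_j)=0$ if $i\equiv j\pmod 2$ and $=1$ otherwise) attains this. Hence $W_*(P_n)=\floor{n/2}\ceil{n/2}=:M$, which fixes $M$ as the target lower bound and shows $P_n$ attains it.

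For the lower bound $W_*(T)\ge M$, recall that in a tree the $uv$-path is unique, so $d_\sig(u,v)=|\sig(P_{uv})|\ge 1$ whenever $u,v$ lie in different parts of the bipartition; already for trees with a balanced bipartition this yields $W_\sig(T)\ge\floor{n/2}\ceil{n/2}=M$ via Lemma~\ref{lem:bipartite}. The content is therefore entirely in unbalanced trees, and here I would reduce everything to a clean extremal statement about cancellation. Writing $Z(\sig)$ for the number of pairs $\{u,v\}$ with $\sig(P_{uv})=0$, every nonzero pair contributes at least $1$, so $W_\sig(T)\ge\binom{n}{2}-Z(\sig)$; thus it suffices to prove
\[
Z(\sig)\le \binom{n}{2}-M=\floor{(n-1)^2/4}=\binom{\ceil{n/2}}{2}+\binom{\floor{n/2}}{2}
\]
for every tree $T$ and signing $\sig$. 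This bound is tight simultaneously for the path with the alternating signing (the cancelling pairs form two balanced cliques) and for the star with a balanced signing (they form a balanced complete bipartite graph), which is strong evidence that it is the right inequality. The natural attack is induction on $n$: deleting a leaf $\ell$ with neighbour $p$ removes exactly the zero-pairs $\{\ell,y\}$, of which there are $k_\ell=\#\{y:\sig(P_{py})=-\sig(\ell p)\}$, and since $\floor{(n-1)^2/4}-\floor{(n-2)^2/4}=\floor{(n-1)/2}$, the induction closes provided some leaf satisfies $k_\ell\le\floor{(n-1)/2}$.

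The main obstacle in the lower bound is exactly this last point: a single leaf need not satisfy $k_\ell\le\floor{(n-1)/2}$ (for a star one is forced to delete a leaf of the majority sign class), so one must show that a suitable leaf always exists, or replace the leaf-deletion induction by a global argument. The most plausible global route is to root $T$ and distribute the zero-pairs according to their lowest common ancestor $w$: a pair with $\mathrm{lca}=w$ lying in distinct child-subtrees satisfies $\sig(P_{wu})=-\sig(P_{wv})$, so setting $\phi_w(x):=\sig(P_{wx})$ reduces the count to pairs of ``opposite heights'' across subtrees, which one bounds subtree by subtree. Controlling this cross-subtree cancellation uniformly over all structures is the crux; the numerics above indicate that the balanced two-clique and balanced complete-bipartite configurations are the only extremizers, which should guide either the inductive leaf choice or the subtree bookkeeping.

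For the upper bound $W_*(T)\le\max_{D\in\c{D}}W_*(D)$, I would show that any tree maximizing $W_*$ has diameter at most $3$ and is therefore a double star. The scheme is a compression/shifting argument: starting from a longest path $v_0v_1\cdots v_D$ with $D\ge 4$, relocate a pendant subtree at one end so as to shorten the path while not decreasing $W_*$, iterating until the diameter drops to $3$, at which point a direct computation of $W_*(D)$ over $D\in\c{D}$ identifies the maximizer. Intuitively small diameter forbids the long alternating cancellations that make $W_\sig$ small, so compression should raise the minimum. The hard part is genuinely different from classical Wiener-index shifting: because $W_*=\min_\sig W_\sig$ is itself an optimization, showing a local move does not decrease it requires controlling the \emph{re-optimized} signing of the modified tree, i.e.\ producing, for every signing of the compressed tree, a signing of the original of no larger Wiener index. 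Establishing this monotonicity of a min-over-signings quantity under structural moves—rather than of a single explicit functional—is the principal obstacle.
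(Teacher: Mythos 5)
First, be aware that the statement you are attacking is not a theorem of the paper at all: it is stated as a conjecture, which the author reports verifying only for $n\le 9$. There is therefore no proof in the paper to compare against, and the standard your attempt must meet is that of a complete, self-contained proof. It does not meet that standard: what you have written is a correct set of reductions plus two explicitly acknowledged open problems, one per inequality. Your preliminary work is sound --- Lemma~\ref{lem:bipartite} together with the alternating signing does give $W_*(P_n)=\floor{n/2}\ceil{n/2}=\floor{n^2/4}$, and since in a tree $d_\sig(u,v)=|\sig(P_{uv})|$ over the unique $uv$-path, the conjectured lower bound would indeed follow from your zero-pair bound $Z(\sig)\le\binom{n}{2}-\floor{n^2/4}=\floor{(n-1)^2/4}$. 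But you then concede that the leaf-deletion induction does not close (a balanced star already forces which leaf must be deleted, and you give no argument that a good leaf, or a workable lowest-common-ancestor bookkeeping, always exists); naming ``the crux'' is not the same as resolving it. There is also a structural risk you should weigh more heavily: the inequality $W_\sig(T)\ge\binom{n}{2}-Z(\sig)$ discards all contributions larger than $1$, so the bound on $Z(\sig)$ is sufficient but not necessary for the conjecture. If some signed tree has $Z(\sig)>\floor{(n-1)^2/4}$, your entire route collapses while the conjecture may still be true.

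The upper bound half is in worse shape. The assertion that a $W_*$-maximizing tree has diameter at most $3$ is precisely what needs proof, and your compression scheme requires exactly the statement you flag as ``the principal obstacle'': for every signing $\sig'$ of the compressed tree $T'$, a signing $\sig$ of the original tree $T$ with $W_\sig(T)\le W_{\sig'}(T')$. No such construction is proposed, and it is not clear that one exists move-by-move --- unlike classical Wiener-index shifting, where one compares a single explicit functional under a local modification, here the re-optimization over signings can interact globally with the move, since the compressed tree may admit cancelling signings with no analogue in $T$. So after your argument both inequalities remain open; what you have is a reasonable research plan, with a correct evaluation of $W_*(P_n)$ and a clean sufficient reformulation of the lower bound, but not a proof of either direction of the conjecture.
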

We have verified this conjecture for $n\le 9$, and we note that this conjecture is false if one only considers stars as opposed to double stars.  This conjecture is somewhat surprising to us because it is essentially the opposite of what happens for the classical case where paths maximize $W(T)$ and stars minimize $W(T)$.

We end this paper by considering Dyck paths, which are one of the most well studied objects in combinatorics.  These are lattice paths from $(0,0)$ to $(2n,0)$ which use $n$ steps along the vector $(1,1)$ and $n$ steps along the vector $(1,-1)$ and such that the path always stays above the $x$-axis.  It is natural to view a Dyck path as a signed path graph by assigning the $(1,x)$ steps the sign $x$, see Figure~\ref{fig:Dyck}.  This leads to the following (somewhat vague) question.

\begin{figure}
	\[
	\begin{tikzpicture}[scale=.7]
		\node at (0,0) {$\bullet$};
		\node at (1,1) {$\bullet$};
		\node at (2,0) {$\bullet$};
		\node at (3,1) {$\bullet$};
		\node at (4,2) {$\bullet$};
		\node at (5,1) {$\bullet$};
		\node at (6,2) {$\bullet$};
		\node at (7,1) {$\bullet$};
		\node at (8,0) {$\bullet$};
		
		\draw (0,0)--(1,1);
		\draw[dashed] (1,1)--(2,0);
		\draw (2,0)--(3,1);
		\draw (3,1)--(4,2);
		\draw[dashed] (4,2)--(5,1);
		\draw (5,1)--(6,2);
		\draw[dashed] (6,2)--(7,1);
		\draw[dashed] (7,1)--(8,0);
	\end{tikzpicture}
	\]
	
	\caption{A Dyck path viewed as a sign graph.}\label{fig:Dyck}
\end{figure}
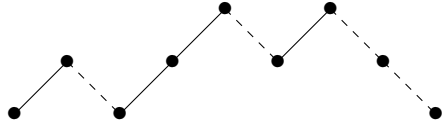

\begin{quest}
	What can be said about the signed Wiener indices of Dyck paths?
\end{quest}
Possible ways to answer this question might be to count the number of paths with a given signed Wiener index, or to determine which signed Wiener indices are achievable by Dyck paths.
\bibliographystyle{abbrv}
\bibliography{Wiener}

\end{document}